\DeclareMathAlphabet{\mathcal}{OMS}{cmsy}{m}{n}
\DeclarePairedDelimiter{\intpart}{|\![}{]\!|}
\newtheorem{theorem}{Theorem}[section]
\newtheorem{lemma}{Lemma}[section]
\newtheorem{corollar}{Corollary}[section]
\newcommand{\R}{\mathbb{R}}
\newcommand{\bx}{\mathbf{x}}
\newcommand{\bn}{\mathbf{n}}
\numberwithin{equation}{section}
\title[Existence and uniqueness of an inverse problem]
{On the existence and uniqueness of an inverse problem in epidemiology}
\author[A. Coronel]{An{\'\i}bal\ Coronel$^\dag$}
\author[L. Friz]{Luis Friz$^\dag$}
\author[I. Hess]{Ian Hess$^\dag$}
\author[M. Zegarra]{Mar{\'\i}a Zegarra$^\ddag$}
\address{$^\dag$GMA, Departamento de Ciencias B\'asicas, 
Facultad de Ciencias,
Universidad del B\'{\i}o-B\'{\i}o, 
Campus Fernando May, Chill\'{a}n, Chile.}
\email{acoronel@ubiobio.cl,lfriz@ubiobio.cl,ihess@egresados.ubiobio.cl}
\address{$^\ddag$
Facultad  de Matem\'aticas, Universidad 
Nacional Mayor de San Marcos, Lima, Per\'u}
\email{maria\_zegarra@hotmail.com}
\date{\today}
\keywords{inverse problem; SIS; identification problem; control problem;}
\begin{document}

\begin{abstract}
In this paper we introduce the functional framework
and the necessary conditions
for the well-posedness of an inverse problem arising 
in the mathematical modeling of disease transmission.
The direct problem is given by
an initial boundary value problem for a reaction diffusion
system. The inverse problem consists in the determination
of the  disease and recovery transmission rates from observed
measurement of the direct problem solution at the end time.
The unknowns of the inverse problem are coefficients of
the reaction term. We formulate the inverse problem 
as an optimization problem for an appropriate cost functional.
Then, the existence of solutions of the inverse 
problem is deduced by proving the existence of a minimizer
for the cost functional. Moreover, we establish the uniqueness 
up an additive constant of identification problem. The uniqueness
is  a consequence of the first order
necessary optimality condition and a stability of the
inverse problem unknowns with respect to the observations.

\end{abstract}
\maketitle

\numberwithin{equation}{section}
\newtheorem{thm}{Theorem}[section]
\newtheorem{lem}[thm]{Lemma}
\newtheorem{prop}[thm]{Proposition}
\newtheorem{cor}[thm]{Corollary}
\newtheorem{defn}{Definition}[section]
\newtheorem{conj}{Conjecture}[section]
\newtheorem{exam}{Example}[section]
\newtheorem{rem}{Remark}[section]
\allowdisplaybreaks

\section{Introduction}

The mathematical modeling of disease transmission is an active research area
of mathematical  
biology~\cite{anderson,bacaer,diekmann,akimenko,armbruster,ge2017,ge,koivu,lu,
nwankwo,saadroy,veliov,widder}. Nowadays, there are several approaches used to construct  
the mathematical models in mathematical epidemiology. 
Despite the different kinds of such models, 
and analogously to  biochemical systems, 
we can distinguish five common steps in the processes of modelling~\cite{chou_2009}:  
collection
and analysis of experimental data and information on the specific disease;
 selection of the mathematical theory to be used in the model formulation; 
 the mathematical analysis of well-posedness of the model;  the calibration or
parameter identification of the model; and  the model validation and refinement.
Moreover, we note that the modeling is a cyclical rather than a linear
process: all assumptions made in the
previous steps are reconsidered and refined and 
upon completion of the modeling process. We can improve the model
by introducing new hypotheses, design new experiments,
made predictions and deep the analysis  of each step.
Thus, in particular, we are interested in the analysis of calibration or
parameter identification of the model. To be more precise,
the aim of this paper is to provide a framework to solve the
inverse problem arising in the step of model calibration
by assuming that the mathematical model is an initial boundary 
value problem for a reaction-diffusion system.

Let us precise the mathematical model or the direct problem. 
We consider that the infectious diseases
taken place in a bounded domain  $\Omega\subset \R^{d}$ $(d=1,2,3)$ and
is described by an SIS reaction-diffusion model, where  
the population density of  susceptible and infected individuals 
at time $t$ and location $\bx$ are given by
$S(\bx,t)$ and $I(\bx,t)$,
respectively. The diffusion matrix is assumed to be equals to the identity.
We assume that the infection process is given 
by the interaction of
susceptible and infected densities at the point $\bx$ and time $t$
via  the ``power law'' $\beta(\bx)S^{m}(\bx,t)I^{n}(\bx,t)$,
where $\beta$ is the rate
of disease transmission and $m,n\in ]0,1[$ are some given (fixed) parameters.
The recovery process is represented by $\gamma(\bx)I(\bx,t)$
with $\gamma$ the rate of disease recovery.  
Thus, the direct problem 
is defined as follows: Given the set of functions $\{\beta,\gamma,S_0,I_0\}$
find the functions $S$ and $I$ satisfying the following 
initial boundary value problem
\begin{align}
S_t - \Delta S 
& = -\beta(\bx)S^{m}I^{n}+\gamma(\bx)I, 
&&  \mbox{in } Q_{T}:=\Omega\times [0,T],
\label{eq1:suceptibles}
\\
I_t - \Delta I 
& = \beta(\bx)S^{m}I^{n}-\gamma(\bx)I, 
&& \mbox{in }  Q_{T},
\label{eq2:infectados}
\\
\nabla S\cdot \bn
&=\nabla I\cdot \bn=0,  
&& \mbox{on } \Gamma:=\partial\Omega\times [0,T],
\label{eq3:frontera}
\\
S(\bx,0)&=S_{0}(\bx), && \mbox{in } \Omega,
\label{eq4:inicialS}
\\
I(\bx,0)&=I_{0}(\bx), &&\mbox{in } \Omega,
\label{eq5:inicialI}
\end{align}
where $\partial\Omega$ is the boundary of $\Omega$ and
$\bn$ is the unit exterior normal to $\partial\Omega$.
The boundary conditions~\eqref{eq3:frontera}
and the functions $S_{0}$ and $I_{0}$ models the initial conditions.

The inverse problem consists in the determination
of the rate functions $\beta$ and $\gamma$ in the SIS model
\eqref{eq1:suceptibles}-\eqref{eq5:inicialI} from observed
measurement for $S$ and $I$ at $t=T$ given 
by the functions $S^{obs}$ and $I^{obs}$ defined on $\Omega$.
Then, we can define the inverse problem 
as follows: Given the set of functions $\{S_0,I_0,S^{obs},I^{obs}\}$
defined on $\Omega,$
find the functions $\beta$ and $\gamma$ such that 
the solution $S$ and $I$ of initial boundary value problem
\eqref{eq1:suceptibles}-\eqref{eq5:inicialI}
satisfy the overspecified end condition  $S(\bx,T)=S^{obs}(\bx),$
$I(\bx,T)=I^{obs}(\bx)$ for $\bx\in\Omega$.
Indeed, in order to precise the
analysis of the inverse problem, we consider
an equivalent reformulation as the following optimization problem
\begin{eqnarray}
\inf \, J(\beta,\gamma)
\qquad 
\mbox{subject to $(S_{\beta,\gamma},I_{\beta,\gamma})$ solution of
\eqref{eq1:suceptibles}-\eqref{eq5:inicialI}},
\label{eq9:OptimizacionEquivalencia}
\end{eqnarray}
where 
\begin{eqnarray}
J(\beta ,\gamma):=\frac{1}{2}
\Big[\|S(\cdot,T)-S^{obs}\|^2_{L^2(\Omega)}
+\|I(\cdot,T)-I^{obs}\|^2_{L^2(\Omega)}\Big]
+\frac{\delta}{2}\Big[
\|\nabla\beta\|^2_{L^2(\Omega)}
+\|\nabla\gamma\|^2_{L^2(\Omega)}
\Big]
\label{eq8:Optimizacion}
\end{eqnarray}
is a functional defined on the admissible set  
\begin{align}
U_{ad}(\Omega)&=\mathcal{A}(\Omega) \cap 
\Big[H^{\intpart{d/2}+1}(\Omega)\times H^{\intpart{d/2}+1}(\Omega)\Big],
&&
\label{eq:admissible}\\
\mathcal{A}(\Omega)&=\Big\{(\beta ,\gamma)\in C^{\alpha}(\overline{\Omega})
\times C^{\alpha}(\overline{\Omega})\; 
:\quad\mbox{Ran}(\beta)\subseteq[\underline{b},\overline{b}]\subset ]0,1[,
\nonumber\\ 
&
\hspace{5.2cm}
\mbox{Ran}(\gamma)\subseteq [\underline{r},\overline{r}]\subset ]0,1[, 
\quad \nabla\beta ,\; \nabla \gamma\in L^{2}(\Omega)\Big\},
\end{align}
and for an appropriate $\delta>0$.
Here, 
$H^m(\Omega) $ and $C^{\alpha}(\overline{\Omega})$ denote
the standard Sobolev  and H\"older spaces $W^{m,2}(\Omega)$ and 
$C^{0,\alpha}(\overline{\Omega})$, respectively; and 
$\mbox{Ran}(f)$ denote the range of function $f$. 
The construction of $U_{ad}(\Omega)$ was recently 
developed in \cite{chs:nota_2018} and also
we note that $U_{ad}(\Omega)=\mathcal{A}(\Omega)$ when $d=1$
and coincides with the  admissible set considered by  
Xiang and Liu in \cite{Huili_2015}.  

The main result of this paper is the necessary conditions
for the well-posedness theory of the inverse problem. More precisely,
we prove the following theorem:
\begin{theorem}
\label{teo:inverso}
Let us consider the notation
\begin{align}
 \mathcal{U}(\Omega)&=\Big\{(\beta ,\gamma)\in 
U_{ad}(\Omega)\::\: \|\beta\|_{L^1(\Omega)}
\mbox{ and }\|\gamma\|_{L^1(\Omega)}\mbox{ are constants}\Big\}.
\label{eq:conj_uniq}
\end{align}
Consider that the open bounded and convex set 
$\Omega$ is such that  $\partial\Omega$ is $ C^1$
and the initial conditions $S_0$ and $I_0$ are functions belong 
 to $C^{2,\alpha}(\overline{\Omega})$ and satisfy the inequalities 
\begin{align}
S_0(\bx)\ge 0,
\quad 
I_0(\bx)\ge 0,
\quad
\int_{\Omega}I_0(\bx)d\bx>0,
\quad 
S_0(\bx)+I_0(\bx)\ge \phi_0>0,
\label{eq:hipo_initial_condition}
\end{align}
on $\Omega$,
for some positive constant $\phi_0$.
Moreover assume that the observation functions 
$S^{obs}$ and $I^{obs}$ are functions belong 
to $L^2(\Omega)$.
Then, 
there exists at least one solution of \eqref{eq9:OptimizacionEquivalencia}
and there exist $\Theta\in\mathbb{R}^+$
such that the solution of \eqref{eq9:OptimizacionEquivalencia} is uniquely 
defined, up an additive constant, on 
$\mathcal{U}(\Omega)$ for any regularization parameter~$\delta>\Theta$.
\end{theorem}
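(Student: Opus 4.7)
The theorem splits naturally into an existence part and a uniqueness part, and I would attack each by a standard scheme from optimal control theory, the difficulty lying in the non-Lipschitz power-law nonlinearity $\beta S^m I^n$ with $m,n\in(0,1)$.

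\emph{Existence.} I would apply the direct method of the calculus of variations to \eqref{eq9:OptimizacionEquivalencia}. First, $J\ge 0$ and $U_{ad}(\Omega)$ is nonempty, so an infimum $j_*\ge 0$ exists and we may pick a minimizing sequence $\{(\beta_k,\gamma_k)\}\subset U_{ad}(\Omega)$. The range constraint in $\mathcal{A}(\Omega)$ gives a uniform $L^\infty$ bound, and the regularization term in $J$ bounds $\|\nabla\beta_k\|_{L^2}$, $\|\nabla\gamma_k\|_{L^2}$ uniformly, so the sequence is bounded in $H^1(\Omega)$; by Rellich--Kondrachov one extracts a subsequence (not relabelled) converging weakly in $H^1(\Omega)$, strongly in $L^2(\Omega)$ and a.e.\ in $\Omega$ to a pair $(\beta^*,\gamma^*)$. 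The range constraint and the H\"older regularity pass to the limit (the latter via the uniform $C^\alpha$ bound inherited from the fixed admissible class, together with Arzel\`a--Ascoli), and the weak closedness of $H^{\lfloor d/2\rfloor+1}(\Omega)$ combined with an Aubin--Lions type compactness argument in that stronger scale secures $(\beta^*,\gamma^*)\in U_{ad}(\Omega)$. Next, invoking the well-posedness and continuous dependence for the direct problem (established in the paper prior to this theorem), $(S_{\beta_k,\gamma_k},I_{\beta_k,\gamma_k})$ converges to $(S_{\beta^*,\gamma^*},I_{\beta^*,\gamma^*})$ in a topology strong enough to pass to the limit in the $L^2$ end-time terms. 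Finally, weak $H^1$ lower semicontinuity of the Dirichlet seminorms gives $J(\beta^*,\gamma^*)\le\liminf J(\beta_k,\gamma_k)=j_*$, proving existence.

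\emph{First-order optimality system.} With existence in hand, I would linearise the control-to-state map at a minimizer $(\beta^*,\gamma^*)$. Differentiability is the delicate step because of the powers $S^m$ and $I^n$; positivity of $S,I$ on $Q_T$, which follows from the structural assumption \eqref{eq:hipo_initial_condition} together with a maximum-principle argument on the SIS system, ensures that $S^m, I^n$ are smooth functions of $(S,I)$ along the minimizer. The G\^ateaux derivative of $J$ can then be expressed through an adjoint pair $(P,Q)$ solving a backward linear parabolic system with source terms $S(\cdot,T)-S^{obs}$, $I(\cdot,T)-I^{obs}$. The Euler--Lagrange variational inequality takes the schematic form
\begin{equation*}
\int_0^T\!\!\int_\Omega S^{m}I^{n}(Q-P)(h_1-\beta^*)\,d\bx\,dt
+\int_0^T\!\!\int_\Omega I\,(P-Q)(h_2-\gamma^*)\,d\bx\,dt
+\delta\int_\Omega\!\big(\nabla\beta^*\cdot\nabla(h_1-\beta^*)+\nabla\gamma^*\cdot\nabla(h_2-\gamma^*)\big)\,d\bx\ge 0
\end{equation*}
for every admissible $(h_1,h_2)$.

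\emph{Uniqueness and role of $\mathcal{U}(\Omega)$.} Given two minimizers $(\beta_i,\gamma_i)$, $i=1,2$, in $\mathcal{U}(\Omega)$, I would subtract their optimality inequalities and test with the cross-choices to obtain an inequality of the form
\begin{equation*}
\delta\bigl(\|\nabla(\beta_1-\beta_2)\|_{L^2}^2+\|\nabla(\gamma_1-\gamma_2)\|_{L^2}^2\bigr)
\le C\bigl(\|\beta_1-\beta_2\|_{L^2}^2+\|\gamma_1-\gamma_2\|_{L^2}^2\bigr),
\end{equation*}
where $C$ bundles the Lipschitz constants of the state and adjoint maps with respect to the coefficients, bounded in terms of the a priori estimates for $(S_i,I_i,P_i,Q_i)$ and of $\|S^{obs}\|_{L^2}+\|I^{obs}\|_{L^2}$; obtaining this stability estimate, with a constant $C$ independent of $\delta$, is the main technical obstacle and requires careful energy estimates exploiting the uniform positive lower bound on $S_i+I_i$ so that the fractional powers can be linearised. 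Combining this with the Poincar\'e--Wirtinger inequality on the zero-mean parts of the differences yields
$(\delta\lambda_1-C)\bigl(\|\beta_1-\beta_2-c_1\|_{L^2}^2+\|\gamma_1-\gamma_2-c_2\|_{L^2}^2\bigr)\le 0$,
where $c_1,c_2$ are the mean-value constants and $\lambda_1$ is the first non-trivial Neumann eigenvalue of $-\Delta$. Setting $\Theta:=C/\lambda_1$, for any $\delta>\Theta$ we conclude that $\beta_1-\beta_2$ and $\gamma_1-\gamma_2$ are constants, which is the claimed uniqueness up to an additive constant; the constraint on $\mathcal{U}(\Omega)$ that $\|\beta\|_{L^1}$ and $\|\gamma\|_{L^1}$ be prescribed then fixes those constants (since both functions are nonnegative), giving full uniqueness inside $\mathcal{U}(\Omega)$.
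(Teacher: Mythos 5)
Your overall strategy coincides with the paper's: a direct-method existence argument, followed by a uniqueness argument that subtracts the two first-order optimality inequalities tested against each other, estimates the resulting coupling terms through stability bounds for the state and adjoint systems, and closes with a Poincar\'e-type inequality to absorb the $L^2$ difference into the gradient term once $\delta$ exceeds a threshold. The differences are localized. In the existence part you extract weak $H^1$ / strong $L^2$ limits and then invoke the $L^2$ continuous dependence of $(S,I)$ on $(\beta,\gamma)$ to pass to the limit in the end-time terms, whereas the paper works with strong H\"older compactness (via the embedding $H^{\intpart{d/2}+1}(\Omega)\subset C^\alpha(\overline{\Omega})$) and Schauder estimates to obtain uniform convergence of $(S_n,I_n)$; your route is arguably more economical, since it reuses the stability estimate of Lemma~\ref{lem:unicidad2} that the paper proves anyway. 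Both arguments, however, leave the same step unsecured: neither the admissible class (which only constrains the range and asserts membership in $C^\alpha$ and $H^{\intpart{d/2}+1}$) nor the functional $J$ (which controls only the $H^1$ seminorm) provides a \emph{uniform} bound on the minimizing sequence in $H^{\intpart{d/2}+1}(\Omega)$ or $C^\alpha(\overline{\Omega})$, so the claim that the limit remains in $U_{ad}(\Omega)$ is not actually established; your appeal to an ``Aubin--Lions type'' argument is misplaced here, since that theorem concerns space-time functions and in any case presupposes exactly the missing uniform bound. In the uniqueness part your scheme is the paper's almost verbatim: your Poincar\'e--Wirtinger inequality with the first nontrivial Neumann eigenvalue plays the role of the paper's ``generalized Poincar\'e inequality'' on $\mathcal{U}(\Omega)$, and you add the (correct, and slightly stronger than stated) observation that the $L^1$ constraint together with nonnegativity forces the additive constant to vanish. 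The one substantive item you assert rather than prove is the stability estimate with a constant independent of $\delta$; the paper devotes Lemma~\ref{lem:unicidad2} to it, and your sketch (energy estimates exploiting the uniform positive lower bounds on $S$ and $I$ to linearize the fractional powers) is indeed the mechanism used there.
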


On the other hand, we recall that inverse problems in reaction-diffusion equations
and systems have been addressed in the literature of the 
last decades, for instance
\cite{chen_2006,deng_2009,sakthivel_2011,Huili_2015,
friedman_2010,marinova_2014,rahmoun_2014}.
In \cite{chen_2006} the authors study the identification of $q(x)$ in the equation
$u_t=\Delta u+q(x)u$ with Dirichlet boundary condition and
from final measurement data $u(x,T)$. They prove
the existence of solutions and develop a  numerical 
solution of the inverse problem by using an optimization problem.
The authors of \cite{deng_2009}  consider the nonlinear reaction-diffusion 
 equation $u_t=\Delta u+p(x)f(u)$ with 
$f$ a nonlinear function and study the identification of $p$,
getting some results for the existence and the local uniqueness.
Now, in \cite{sakthivel_2011} the authors study  the inverse 
problem for a reaction-diffusion system with a linear reaction term
and obtain existence and local uniqueness of the inverse problem.
More recently, in \cite{Huili_2015} the authors have studied  the one-dimensional 
version of the inverse problem considered in this paper. They obtain a result
for existence and local uniqueness of the solution by assuming that 
the infection process is modeled by a 
frequency-dependent transmission function instead of the power law function.
 Now, the articles
\cite{friedman_2010,marinova_2014,rahmoun_2014} are focused on inverse problems
in epidemic systems, but are of different type to that one considered 
in this paper. Thus, the Theorem~\ref{teo:inverso} is an extension to the multidimensional
global uniqueness framework of the one-dimensional and local uniqueness 
results obtain recently in \cite{Huili_2015}.

The rest of the paper is organized in two sections. In section 2 we present some results
for the direct problem solution, we introduce the adjoint state
and the necessary optimality conditions, and prove a stability result. On 
section~3 we present the proof of Theorem~\ref{teo:inverso}.

\section{Preliminary}

\subsection{Direct problem solution} 

The well-posedness of the direct problem  \eqref{eq1:suceptibles}-\eqref{eq5:inicialI}
is given by the following result.

\begin{theorem}
 \label{teo:direct_problem}
Consider that $\Omega,S_0$ and $I_0$ satisfy
the hypotheses of Theorem~\ref{teo:inverso}. 
If $(\beta,\alpha)\in C^{\alpha}(\overline{\Omega})\times
C^{\alpha}(\overline{\Omega})$,
the initial boundary value problem \eqref{eq1:suceptibles}-\eqref{eq5:inicialI}
admits a unique positive classical solution  $(S,I)$, such that 
$S$ and $I$ are belong to $C^{2+\alpha,1+\alpha/2}(\overline{Q}_T)$
and also
$S$ and $I$ are bounded on~$\overline{Q}_T$, for any given~$T\in \R^+$.
\end{theorem}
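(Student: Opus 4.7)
The plan is to decouple the system by passing to the total population variable, which satisfies a pure heat equation, and then reduce the problem to a scalar semilinear equation to which Schauder's fixed point theorem and the parabolic maximum principle apply.

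First I would set $N=S+I$. Adding \eqref{eq1:suceptibles} and \eqref{eq2:infectados}, the reaction terms cancel and $N$ satisfies the linear heat equation $N_t-\Delta N=0$ in $Q_T$ with homogeneous Neumann condition and initial datum $N_0=S_0+I_0\in C^{2+\alpha}(\overline{\Omega})$. Classical parabolic Schauder theory yields a unique solution $N\in C^{2+\alpha,1+\alpha/2}(\overline{Q}_T)$, the minimum principle combined with \eqref{eq:hipo_initial_condition} gives $N(\bx,t)\ge \phi_0>0$, and the maximum principle gives $N\le \|N_0\|_{L^\infty(\Omega)}=:M$. Thus the total density is a priori controlled and strictly positive.

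Next I would substitute $S=N-I$ into \eqref{eq2:infectados} to obtain the scalar equation
\begin{equation*}
I_t-\Delta I+\gamma\,I=\beta\,(N-I)^{m}I^{n}\quad\text{in }Q_T,\qquad \nabla I\cdot\bn=0\text{ on }\Gamma,\qquad I(\cdot,0)=I_0.
\end{equation*}
Existence would follow from Schauder's fixed point theorem applied to the map $\mathcal{T}:\tilde I\mapsto I$ on the convex closed set $\mathcal{K}=\{v\in C^{\alpha,\alpha/2}(\overline{Q}_T):0\le v\le M\}$, where $I$ solves the linear parabolic problem with right-hand side $\beta\,((N-\tilde I)_+)^{m}(\tilde I_+)^{n}-\gamma\tilde I+\gamma\tilde I$ (truncating at zero). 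The nonnegative truncated nonlinearity is Hölder continuous, so linear parabolic Schauder estimates give $I\in C^{2+\alpha,1+\alpha/2}(\overline{Q}_T)$ and compactness of $\mathcal{T}$ in $\mathcal{K}$. Invariance of $\mathcal{K}$ comes from the maximum principle: the lower bound $I\ge 0$ uses that the right-hand side is nonnegative, and $I\le N\le M$ is obtained by comparing $I$ with $N$ (the difference $N-I=S$ satisfies an equation with nonnegative source $\gamma I$ on $\{S=0\}$).

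After extracting a fixed point I would upgrade its regularity and establish strict positivity. The strong maximum principle applied to $I_t-\Delta I+\gamma I\ge 0$ with $I\ge 0,\not\equiv 0$ (since $\int_\Omega I_0\,d\bx>0$) yields $I(\bx,t)>0$ in $\Omega\times(0,T]$; writing $S=N-I$ and examining the sign of the reaction term on $\{S=0\}$ gives $S\ge 0$, hence a similar strong maximum principle argument on the equation for $S$ (whose source $\gamma I$ is strictly positive for $t>0$) delivers $S>0$ as well. With both $S,I$ bounded away from $0$ on any compact subset of $\overline\Omega\times(0,T]$, the map $(S,I)\mapsto \beta S^m I^n$ becomes smooth and a standard parabolic bootstrap yields $(S,I)\in C^{2+\alpha,1+\alpha/2}(\overline{Q}_T)$. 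Uniqueness then follows from local Lipschitz continuity of the nonlinearity on the region where $S,I$ stay positive, using a Gronwall estimate on the $L^2$ norms of the differences of two solutions sharing the same data; the initial layer where $S$ or $I$ can touch zero is handled by the monotonicity property $(a^m-b^m)(a-b)\ge 0$ for $m\in(0,1)$, so the offending terms enter with the favorable sign in the energy identity.

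The main obstacle is precisely controlling the non-Lipschitz nonlinearity $\beta S^m I^n$ at points where $S$ or $I$ vanishes: both the fixed-point argument (where I need to truncate and then check the truncation is never active) and the uniqueness argument (where I cannot quote standard Lipschitz theory) have to exploit the sign structure of the reaction term together with the strict positivity of $N=S+I$ granted by \eqref{eq:hipo_initial_condition}.
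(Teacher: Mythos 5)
Your proposal follows the same basic route as the paper, but the paper's ``proof'' of this theorem is essentially a two\--line sketch: it cites parabolic Schauder theory \cite{Ladyzhenskaya,Krylov,liberman} for existence and uniqueness, and for positivity it observes that $N=S+I$ solves the Neumann heat problem, so the maximum principle gives a positive lower bound (the paper actually writes the pointwise inequality $N(\bx,t)\ge S_0(\bx)+I_0(\bx)$, which the heat flow does not preserve; your version, $N\ge\min_{\overline\Omega}(S_0+I_0)\ge\phi_0$, is the correct statement). You reproduce that reduction and then supply the fixed\--point construction the paper delegates to references. That part of your plan is sound modulo bookkeeping: the truncated right\--hand side is only $C^{m\alpha}$ in space, which still gives compactness of $\mathcal{T}$, and the invariance $0\le \mathcal{T}\tilde I\le M$ needs either a slightly enlarged box (a supersolution of ODE type) or the comparison $I\le N$ carried out for the fixed point rather than for each iterate, since the frozen source $\beta((N-\tilde I)_+)^m(\tilde I_+)^n$ does not by itself force $\mathcal{T}\tilde I\le N$.

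The step I would not sign off on as written is uniqueness. Testing the difference of two solutions with $(\delta S,\delta I)=(S_1-S_2,I_1-I_2)$ and summing, the reaction contribution is $\int\beta\,\Delta F\,(\delta I-\delta S)$ with $\Delta F=(S_1^m-S_2^m)I_1^n+S_2^m(I_1^n-I_2^n)$. The monotonicity $(a^m-b^m)(a-b)\ge0$ only kills the single term $-\beta(S_1^m-S_2^m)I_1^n\,\delta S$; the diagonal term $+\beta S_2^m(I_1^n-I_2^n)\,\delta I$ enters with the \emph{unfavorable} sign, and together with the cross terms it is controlled only by quantities of the form $|\delta I|^{1+n}$, $|\delta S|^m|\delta I|$, $|\delta I|^n|\delta S|$, which a Gronwall argument on $\|\delta S\|_{L^2}^2+\|\delta I\|_{L^2}^2$ cannot absorb near the set where $S$ or $I$ vanishes (the resulting differential inequality $y'\le Cy^{(1+n)/2}$ with $y(0)=0$ does not force $y\equiv0$). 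So the ``favorable sign'' claim does not close the argument in the initial layer. To repair it you need either strict positivity of $S_0$ and $I_0$ --- which is what the paper implicitly uses in Corollary~\ref{corolar:bounded}, whose uniform lower bounds on all of $\overline{Q}_T$ likewise require more than \eqref{eq:hipo_initial_condition} --- or a genuinely different uniqueness mechanism adapted to H\"older nonlinearities (e.g.\ an Osgood/$L^1$\--contraction argument exploiting that the two reaction terms sum to zero). The paper sidesteps this entirely by citation, so your proposal is more ambitious than the paper's treatment, but as sketched the uniqueness step has a genuine gap.
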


The existence and the uniqueness 
can be developed by the Shauder's theory for parabolic
equations~\cite{Ladyzhenskaya,Krylov,liberman}. Meanwhile, the positive  behavior of the 
solution is a consequence of the maximum principle. Indeed, 
if we denote by $N$ the total population, i.e. $N(\bx,t)=S(\bx,t) + I(\bx,t)$.
Then, from the system \eqref{eq1:suceptibles}-\eqref{eq5:inicialI} we
have that $N$ satisfy the following initial boundary value problem
\begin{align*}
N_t - \Delta N &= 0, && \mbox{in }  Q_{T},
\\
\nabla N\cdot \bn&=0, && \mbox{on }\Gamma,
\\
N(\bx,0)&= S_{0}(\bx) + I_{0}(\bx), && \mbox{in } \Omega.
\end{align*}
By the maximum principle of parabolic equations
and the hypothesis \eqref{eq:hipo_initial_condition}
we have that $N(\bx,t)\ge S_{0}(\bx) + I_{0}(\bx)\ge\phi_{0}>0$ 
in~$Q_{T}$.

\begin{corollar}
\label{corolar:bounded}
Consider that $\Omega,S_0$ and $I_0$ satisfy
the hypotheses of Theorem~\ref{teo:direct_problem}. 
If $(\beta,\alpha)\in C^{\alpha}(\overline{\Omega})\times
C^{\alpha}(\overline{\Omega})$
and $(S,I)$ is the solution of the initial boundary value problem 
of \eqref{eq1:suceptibles}-\eqref{eq5:inicialI}, then the estimates
$0<\mathbb{S}_m\le S(\bx,t)\le \mathbb{S}_M,$
and 
$0<\mathbb{I}_m\le I(\bx,t)\le \mathbb{I}_M,$
are valid on $\overline{Q}_T,$ for some strictly positive
constants $\mathbb{S}_m,\mathbb{S}_M,\mathbb{I}_m,$ and $\mathbb{I}_M.$  
\end{corollar}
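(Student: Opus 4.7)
My plan is to reduce everything to (i) the linear homogeneous Neumann heat problem satisfied by the total population $N=S+I$, (ii) the nonnegativity of $S$ and $I$ provided by Theorem~\ref{teo:direct_problem}, (iii) the strong parabolic maximum principle together with Harnack's inequality applied to each species individually, and (iv) the lower envelope $N\ge\phi_0$ inherited from the initial data.

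For the upper bounds I would invoke the decomposition already used in the proof of Theorem~\ref{teo:direct_problem}: $N$ satisfies the linear heat equation with homogeneous Neumann conditions and initial datum $S_0+I_0\in C^{2,\alpha}(\overline{\Omega})$. The parabolic maximum principle then gives $N(\bx,t)\le M:=\|S_0+I_0\|_{L^{\infty}(\Omega)}$ on $\overline{Q}_T$, and combined with $S,I\ge 0$ this yields $\mathbb{S}_M=\mathbb{I}_M:=M$.

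For the lower bound on $I$, I would note that $\beta S^mI^n\ge 0$ and $\gamma\le\overline{r}$ imply $I_t-\Delta I\ge -\overline{r}\,I$ with homogeneous Neumann conditions; then $w(\bx,t):=e^{\overline{r}t}I(\bx,t)$ is a nonnegative supersolution of the pure heat equation with initial data $I_0\not\equiv 0$ (since $\int_{\Omega}I_0>0$). The strong maximum principle forces $w>0$ on $\overline{\Omega}\times(0,T]$, and the parabolic Harnack inequality, applied on a fixed covering of $\overline{Q}_T$, upgrades pointwise positivity to the uniform estimate $\mathbb{I}_m:=\inf_{\overline{Q}_T}I>0$. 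For the lower bound on $S$ I would use the analogous inequality $S_t-\Delta S\ge -\overline{b}M^{n}S^{m}$ together with a sub/supersolution comparison against the ordinary differential equation $y'=-\overline{b}M^{n}y^{m}$, and complement it, on the subset where $S_0$ is small, by the identity $S=N-I\ge \phi_0-I$ to trade positivity of $S$ against the upper control of $I$ already obtained near $t=0$.

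The main obstacle is obtaining these bounds \emph{uniformly down to $t=0$}: hypothesis \eqref{eq:hipo_initial_condition} only requires $S_0+I_0\ge \phi_0$, so each of $S_0,I_0$ may vanish on a subset of $\Omega$ and the individual positivity at $t=0$ must be transferred through the constraint $N=S+I\ge\phi_0$. The technical work therefore consists in splicing the Harnack estimate on $\overline{\Omega}\times[\tau,T]$ with a short-time argument on $[0,\tau]$ that uses this conservation inequality to pass positivity from one species to the other, thereby producing strictly positive constants $\mathbb{S}_m,\mathbb{I}_m$ genuinely uniform in $(\bx,t)\in\overline{Q}_T$.
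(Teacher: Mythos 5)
The first thing to note is that the paper gives no proof of Corollary~\ref{corolar:bounded} at all: the only argument supplied in that subsection concerns $N=S+I$, which solves the homogeneous Neumann heat problem and therefore satisfies $\phi_0\le N\le\|S_0+I_0\|_{L^{\infty}(\Omega)}$ by the maximum principle. Your upper bounds $\mathbb{S}_M=\mathbb{I}_M:=\|S_0+I_0\|_{L^{\infty}(\Omega)}$ follow exactly that route combined with $S,I\ge0$ and are correct, and your use of the strong maximum principle plus the parabolic Harnack inequality to obtain $\inf_{\overline{\Omega}\times[\tau,T]}I>0$ for each fixed $\tau>0$ is also sound.

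The genuine gap is in your final ``splicing'' step, and under the stated hypotheses it cannot be closed. At $t=0$ the claimed estimates read $I_0(\bx)\ge\mathbb{I}_m>0$ and $S_0(\bx)\ge\mathbb{S}_m>0$ for every $\bx\in\overline{\Omega}$; these are assertions about the initial data alone, which no PDE argument can manufacture. Hypothesis \eqref{eq:hipo_initial_condition} only requires $S_0,I_0\ge0$, $\int_{\Omega}I_0>0$ and $S_0+I_0\ge\phi_0$, so $I_0$ (or $S_0$) may vanish on a nonempty subset of $\overline{\Omega}$, and at such a point the corollary's lower bound fails on $\overline{Q}_T$. Your proposed trade $S=N-I\ge\phi_0-I$ near $t=0$ does nothing for the bound on $I$ at a zero of $I_0$, and it also fails for $S$: at a zero of $S_0$ one has $I_0\ge\phi_0$ there, so the ``upper control of $I$ near $t=0$'' you invoke cannot push $I$ strictly below $\phi_0$. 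What is actually provable is that the four bounds hold on $\overline{\Omega}\times[\tau,T]$ for every $\tau>0$ (strong maximum principle, Harnack, continuity and compactness), and on all of $\overline{Q}_T$ only under the additional assumption $S_0>0$, $I_0>0$ on $\overline{\Omega}$ --- an assumption the corollary would need to state explicitly. A secondary soft spot: the comparison ODE $y'=-\overline{b}M^{n}y^{m}$ with $m\in\,]0,1[$ reaches zero in the finite time $y(0)^{1-m}/\bigl(\overline{b}M^{n}(1-m)\bigr)$, so it controls $S$ from below only on a short initial interval and only where $\inf S_0>0$; for general $T$ you must fall back on the Harnack/compactness argument rather than this comparison.
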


\subsection{Adjoint System}
Let us consider that $(\bar{\beta},\bar{\gamma})$ is a solution of 
the optimal control problem \eqref{eq9:OptimizacionEquivalencia}
and $(\bar{S},\bar{I})$ is the corresponding solution of 
\eqref{eq1:suceptibles}-\eqref{eq5:inicialI} with
$(\bar{\beta},\bar{\gamma})$ instead of $(\bar{\beta},\bar{\gamma})$.
Then we introduce $(p_{1},p_{2})$ the adjoint variables, i.e., 
the solution of the adjoint system which is given by the following 
backward boundary value problem
\begin{align}
(p_1)_t+\Delta p_1 
&= m \bar{\beta}(\bx)\bar{S}^{m-1}\bar{I}^{n}(p_1-p_2),  
&&\mbox{in } Q_{T},
\label{eq10:adjunto1}
\\
(p_2)_t+\Delta p_2
&= n \bar{\beta}(\bx)\bar{S}^{m}\bar{I}^{n-1}(p_1-p_2)
-\bar{\gamma}(\bx)(p_1-p_2), 
&&\mbox{in } Q_{T},
\label{eq11:adjunto2}
\\
\nabla p_1\cdot \bn
&=\nabla p_2\cdot \bn=0, 
&&\mbox{on }\Gamma ,
\label{eq12:adjunto3}
\\
p_1(\bx,T)&=\bar{S}(\bx,T)-S^{obs}(\bx),  
&&\mbox{in }\Omega ,
\label{eq13:adjunto4}
\\
p_2(\bx,T)&=\bar{I}(\bx,T)-I^{obs}(\bx), 
&&\mbox{in }\Omega .
\label{eq14:adjunto5}
\end{align}
The existence of solutions 
for system \eqref{eq10:adjunto1}-\eqref{eq14:adjunto5}
can be developed by similar arguments to a similar
result presented in~\cite{apreutesei}. Now, for our purpose,
we need some a priori estimates given on the following
result.

\begin{lemma}
Consider that $\Omega,S_0,I_0,S^{obs}$ and $I^{obs}$, satisfy
the hypotheses of Theorem~\ref{teo:inverso}.
Moreover, consider that $(\bar{\beta},\bar{\gamma})\in U_{ad}$ 
is a solution of \eqref{eq9:OptimizacionEquivalencia}, and $(\bar{S},\bar{I})$
is the solution of \eqref{eq1:suceptibles}-\eqref{eq5:inicialI}
with $(\bar{\beta},\bar{\gamma})$ instead of 
$(\beta,\gamma)$. 
Then, the solution of
the adjoint system \eqref{eq10:adjunto1}-\eqref{eq14:adjunto5} 
satisfy the following estimates
\begin{eqnarray}
&&\|p_1(\cdot,t)\|_{L^{2}(\Omega)}^{2} 
+ \|p_2(\cdot,t)\|_{L^{2}(\Omega)}^{2} \leq P_1,
\label{eq:estimatenormaL2}
\\
&&\|p_1 (\cdot,t)\|_{H_0^1(\Omega)}
+ \|p_2(\cdot,t)\|_{H_0^1(\Omega)}\leq P_2,
\label{eq:estimatenormaH01}
\\
&&\|\Delta p_1 (\cdot,t)\|_{L^2(\Omega)}
+ \|\Delta p_2(\cdot,t)\|_{L^2(\Omega)}\leq P_3,
\label{eq:estimatenormaH2}
\\
&&\|p_1 (\cdot,t)\|_{L^{\infty}(\Omega)}\leq P_4,
\quad\|p_2 (\cdot,t)\|_{L^{\infty}(\Omega)}\leq P_5,
\label{eq:estimatenormaLinfinito}
\end{eqnarray}
for $t\in [0,T]$ and some positive constants $P_1,\ldots,P_5$.
\label{lem:boundedp_i}
\end{lemma}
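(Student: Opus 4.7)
The plan is to time-reverse the backward system into a forward linear reaction--diffusion system, and then derive the four estimates by the classical energy method, leveraging the $L^\infty$ bounds on the coefficients that Corollary~\ref{corolar:bounded} provides. Concretely, setting $\tau=T-t$ and $q_i(\bx,\tau)=p_i(\bx,T-\tau)$ for $i=1,2$ converts \eqref{eq10:adjunto1}--\eqref{eq14:adjunto5} into
\[
(q_i)_\tau-\Delta q_i=a_i(\bx,\tau)(q_1-q_2)\quad(i=1,2),
\]
with homogeneous Neumann data and initial conditions $q_1(\cdot,0)=\bar S(\cdot,T)-S^{obs}$, $q_2(\cdot,0)=\bar I(\cdot,T)-I^{obs}\in L^2(\Omega)$. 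By Corollary~\ref{corolar:bounded} and the definition of $\mathcal{A}(\Omega)$, the coefficients $a_1=m\bar\beta\bar S^{m-1}\bar I^{n}$ and $a_2=n\bar\beta\bar S^{m}\bar I^{n-1}-\bar\gamma$ belong to $L^\infty(Q_T)$, with bounds depending only on the structural constants $\mathbb{S}_m,\mathbb{S}_M,\mathbb{I}_m,\mathbb{I}_M,\bar b,\bar r,m,n$.

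For \eqref{eq:estimatenormaL2}, I would test the reformulated system by $(q_1,q_2)$, integrate over $\Omega$, use the Neumann condition to eliminate the boundary contribution, apply Young's inequality to split the term $a_i(q_1-q_2)q_i$, and conclude via Gronwall's inequality a uniform bound on $\|q_i(\cdot,\tau)\|_{L^2}^2$; reverting to $p_i$ yields \eqref{eq:estimatenormaL2}. For \eqref{eq:estimatenormaH01} and \eqref{eq:estimatenormaH2}, I would test instead by $-\Delta q_i$, producing the identity
\[
\tfrac12\tfrac{d}{d\tau}\|\nabla q_i\|_{L^2}^2+\|\Delta q_i\|_{L^2}^2=-\int_{\Omega}a_i(q_1-q_2)\Delta q_i\,d\bx,
\]
after which Young's inequality absorbs $\tfrac12\|\Delta q_i\|_{L^2}^2$ into the left-hand side while the previous $L^2$ bound controls the remainder; a further Gronwall step delivers the $H^1$ estimate, and time integration, combined with elliptic regularity for the Neumann Laplacian, controls $\|\Delta q_i\|_{L^2}$ pointwise. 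Finally, \eqref{eq:estimatenormaLinfinito} follows either from the embedding $H^2(\Omega)\hookrightarrow L^\infty(\Omega)$ (valid for $d\le 3$) together with \eqref{eq:estimatenormaH2}, or from a direct Moser-type iteration applied to the linear system with $L^\infty$ coefficients.

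The chief obstacle is the regularity mismatch at $\tau=0$: the initial data $q_i(\cdot,0)$ lies merely in $L^2(\Omega)$, whereas \eqref{eq:estimatenormaH01}--\eqref{eq:estimatenormaLinfinito} are claimed pointwise for every $t\in[0,T]$. Bridging this gap requires the parabolic smoothing of the Neumann heat semigroup; the standard way is to multiply the $H^1$ identity by $\tau$ before integrating, so as to obtain an inequality of the form $\tau\|\nabla q_i(\cdot,\tau)\|_{L^2}^2\le C\|q_i(\cdot,0)\|_{L^2}^2$, and then iterate this bootstrap with weights $\tau^k$ until the $H^2$ level is reached. The constants $P_2,P_3,P_4,P_5$ then depend on the $L^2$-norms of the terminal data and, near $t=T$, on a negative power of $T-t$; making this quantitative in a form that is genuinely uniform on $[0,T]$ is where the argument is most delicate and, as stated, appears to require slightly more regularity on $S^{obs},I^{obs}$ than the bare $L^2$ hypothesis.
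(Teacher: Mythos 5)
Your proposal follows essentially the same route as the paper's proof: the change of variables $\tau=T-t$, testing the time-reversed system by $q_i$ to obtain \eqref{eq:estimatenormaL2} via Gronwall, testing by $\Delta q_i$ to obtain \eqref{eq:estimatenormaH01}--\eqref{eq:estimatenormaH2}, and concluding \eqref{eq:estimatenormaLinfinito} from the embedding $H^2(\Omega)\hookrightarrow L^\infty(\Omega)$ for $d\le 3$.

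The one substantive difference is the obstacle you flag at the end, and you are right that it is genuine: with $S^{obs},I^{obs}$ only in $L^2(\Omega)$, the data $q_i(\cdot,0)$ are merely $L^2$, so pointwise-in-time bounds at the $H^1$ and $H^2$ levels can only come from parabolic smoothing and necessarily degenerate as $t\to T$ (equivalently $\tau\to 0$), unless extra regularity of the terminal data is assumed. The paper's own proof does not resolve this: it passes from the differential inequality \eqref{hatCCC_prev} (and its $H^1$ analogue) to pointwise bounds on $\|\nabla w_i(\cdot,\tau)\|_{L^2}$ and $\|\Delta w_i(\cdot,\tau)\|_{L^2}$ by simply discarding the $\frac{d}{d\tau}$ terms, which is not justified without a sign condition, and it implicitly uses $\|w_i(\cdot,0)\|_{H^1_0}<\infty$. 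Your weighted-in-time bootstrap ($\tau^k$ weights) is the standard honest repair, at the cost of constants blowing up near $t=T$; a uniform statement on $[0,T]$ as claimed in \eqref{eq:estimatenormaH01}--\eqref{eq:estimatenormaLinfinito} would require $S^{obs},I^{obs}$ (hence $w_i(\cdot,0)$) in $H^1$ or better. So your plan is sound and, if anything, more careful than the argument it is being compared against.
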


\begin{proof}
Let us consider the change of variable $\tau=T-t$ for $t\in [0,T]$ and 
also consider the notation
\begin{eqnarray*}
w_i(\bx,\tau)=p_i(\bx,T-\tau),\;\; i=1,2,
\quad
S^{*}(\bx,\tau)=\bar{S}(\bx,T-\tau),    
\quad
I^{*}(\bx,\tau)=\bar{I}(\bx,T-\tau). 
\end{eqnarray*}
Then, the adjoint system \eqref{eq10:adjunto1}-\eqref{eq14:adjunto5}
can be rewritten as follows
\begin{align}
(w_1)_\tau-\Delta w_1 
&= -m \bar{\beta}(\bx)({S}^{*})^{m-1}({I}^{*})^{n}(w_1-w_2), 
&&\mbox{in } Q_{T},
\label{eq:nuevoadjunto1}
\\
(w_2)_\tau-\Delta w_2 
&= -n \bar{\beta}(\bx)(S^{*})^{m}(I^{*})^{n-1}(w_1-w_2)
+\bar{\gamma}(\bx)(w_1-w_2),
&&\mbox{in } Q_{T},
\label{eq:nuevoadjunto2}
\\
\nabla w_1\cdot \bn
&=\nabla w_2\cdot \bn=0,  
&&\mbox{on }\Gamma ,
\label{eq:nuevoadjunto3}
\\
w_1(\bx,0)&=\bar{S}(\bx,T)-S^{obs}(\bx), 
\quad
w_2(\bx,0)=\bar{I}(\bx,T)-I^{obs}(\bx),
&&\mbox{in }\Omega.
\label{eq:nuevoadjunto5}
\end{align}
Now, we get for $w_i$ the estimates  of  the form \eqref{eq:estimatenormaL2},
\eqref{eq:estimatenormaH01}, and 
\eqref{eq:estimatenormaLinfinito}. 

In order to prove \eqref{eq:estimatenormaL2}, we
multiply \eqref{eq:nuevoadjunto1} 
by $w_1$ and \eqref{eq:nuevoadjunto2} by $w_2$, 
integrate on $\Omega$ and use the Green formula, to get
\begin{eqnarray*}
\int_{\Omega}(w_1)_\tau w_1\,d\bx 
+ \int_{\Omega}(\nabla w_1)^{2}\, d\bx &=&
-m\int_{\Omega}\bar{\beta}(\bx)(S^{*})^{m-1}(I^{*})^{n}w_1^2\, d\bx
\\
&&+m \int_{\Omega}\bar{\beta}(\bx)(S^{*})^{m-1}(I^{*})^{n}w_1w_2\, d\bx,
\\
\int_{\Omega}(w_2)_\tau w_2\,d\bx
+ \int_{\Omega}(\nabla w_2)^{2}\, d\bx &=&
-\int_{\Omega}\left[ n\bar{\beta}(\bx)(S^{*})^{m}(I^{*})^{n-1}
- \bar{\gamma}(\bx)\right]w_1w_2\, d\bx \nonumber
\\
&&+\int_{\Omega}\left[ n\bar{\beta}(\bx)(S^{*})^{m}(I^{*})^{n-1} 
- \bar{\gamma}(\bx)\right]w_2^{2}\, d\bx ,
\end{eqnarray*}
respectively. Then, adding the equalities, applying the Cauchy 
inequality, rearranging some terms, and applying
the Corollary~\ref{corolar:bounded},  we can deduce the following estimate
\begin{eqnarray}
&&\frac{1}{2}\frac{d}{d\tau}
\Big(\|w_1(\cdot,\tau)\|_{L^2(\Omega)}^2+\|w_2(\cdot,\tau)\|_{L^2(\Omega)}^2\Big)
+\|\nabla w_1(\cdot,\tau)\|_{L^2(\Omega)}^2
+\|\nabla w_2(\cdot,\tau)\|_{L^2(\Omega)}^2
\nonumber\\
&&
\hspace{1cm}
\le \hat{C}
\Big[
\|w_1(\cdot,\tau)\|_{L^2(\Omega)}^2
+\|w_2(\cdot,\tau)\|_{L^2(\Omega)}^2
\Big].
\label{hatCCC_prev}
\end{eqnarray}
with
\begin{align}
\hat{C}=\max\left\{
\frac{3\hat{C}_1+\hat{C}_2}{2},
\frac{\hat{C}_1+3\hat{C}_2}{2}
\right\},
\quad
\hat{C}_1=\overline{b}\: m\:
\mathbb{S}_m^{m-1}
\mathbb{I}_M^n,
\quad
\hat{C}_2=
\overline{b}\: n\:
\mathbb{S}_M^m
\mathbb{I}_n^{n-1}+\overline{r}.
\label{hatCCC}
\end{align}
Then, from \eqref{hatCCC_prev} and the Gronwall inequality, we obtain
\begin{eqnarray}
\|w_1(\cdot,\tau)\|_{L^{2}(\Omega)}^{2} + \|w_2(\cdot,\tau)\|_{L^{2}(\Omega)}^{2} \leq
\left(\|w_1(\cdot,0)\|_{L^{2}(\Omega)}^{2} + \|w_2(\cdot,0)\|_{L^{2}(\Omega)}^{2}\right)
e^{2\hat{C} T},
\label{eq:estimatenormaL2:w}
\end{eqnarray}
which implies \eqref{eq:estimatenormaL2}.

From \eqref{hatCCC_prev} and \eqref{eq:estimatenormaL2:w},
we have that 
\begin{eqnarray*}
\|\nabla w_1(\cdot,\tau)\|_{L^{2}(\Omega)}^{2} 
+ \|\nabla w_2(\cdot,\tau)\|_{L^{2}(\Omega)}^{2} \leq
\hat{C}\:e^{2\hat{C} T}\:
\left(\|w_1(\cdot,0)\|_{L^{2}(\Omega)}^{2} 
+ \|w_2(\cdot,0)\|_{L^{2}(\Omega)}^{2}\right)
\end{eqnarray*}
and by the definition of the norm of $H_0^1(\Omega)$ we deduce
the estimate~\eqref{eq:estimatenormaH01}.

On the other hand, using the  fact that
\begin{eqnarray*}
\int_{\Omega}(w_i)_\tau\Delta w_i\,d\bx 
=-\int_{\Omega}\nabla[(w_i)_\tau]\cdot \nabla w_i\,d\bx
+\int_{\partial\Omega}(w_i)_\tau\nabla(w_i)\cdot\bn \,dS
=-\frac{1}{2}\frac{d}{d\tau}\| w_i(\cdot,\tau) \|_{L_2(\Omega)}^{2},
\end{eqnarray*}
for $i=1,2$. We note that, multiplying \eqref{eq:nuevoadjunto1} by $\Delta w_1$,
multiplying \eqref{eq:nuevoadjunto2} by $\Delta w_2$, 
integrating on~$\Omega$, and adding the results, we deduce that
\begin{eqnarray*}
&&\frac{1}{2}\frac{d}{d\tau}
\Big(\|w_1(\cdot,\tau)\|_{H^1_0(\Omega)}^2+\|w_2(\cdot,\tau)\|_{H^1_0(\Omega)}^2\Big)
+\|\Delta w_1(\cdot,\tau)\|_{L^2(\Omega)}^2
+\|\Delta w_2(\cdot,\tau)\|_{L^2(\Omega)}^2
\\
&&
\hspace{1cm}
\le
\hat{C}
\Big[
\epsilon\|w_1(\cdot,\tau)\|_{L^2(\Omega)}^2
+\frac{1}{4\epsilon}\|\Delta w_1(\cdot,\tau)\|_{L^2(\Omega)}^2
+\epsilon\|w_2(\cdot,\tau)\|_{L^2(\Omega)}^2
+\frac{1}{4\epsilon}\|\Delta w_2(\cdot,\tau)\|_{L^2(\Omega)}^2
\Big]
\end{eqnarray*}
with $\hat{C}$ defined on \eqref{hatCCC} and any $\epsilon>0.$
Then, we have that
\begin{eqnarray*}
&&\frac{1}{2}\frac{d}{d\tau}
\Big(\|w_1(\cdot,\tau)\|_{H^1_0(\Omega)}^2+\|w_2(\cdot,\tau)\|_{H^1_0(\Omega)}^2\Big)
+
\left(1-\frac{\hat{C}}{4\epsilon}\right)
\Big(
\|\Delta w_1(\cdot,\tau)\|_{L^2(\Omega)}^2
+\|\Delta w_2(\cdot,\tau)\|_{L^2(\Omega)}^2
\Big)
\\
&&
\hspace{1cm}
\le
\epsilon\hat{C}
\Big[
\|w_1(\cdot,\tau)\|_{L^2(\Omega)}^2
+\|w_2(\cdot,\tau)\|_{L^2(\Omega)}^2
\Big].
\end{eqnarray*}
Now, by selecting $\epsilon>\hat{C}/4$ and using the estimate 
\eqref{eq:estimatenormaL2:w} we get
\begin{eqnarray*}
&&
\|\Delta w_1(\cdot,\tau)\|_{L^2(\Omega)}^2
+\|\Delta w_2(\cdot,\tau)\|_{L^2(\Omega)}^2
\le
\frac{4\epsilon^2\hat{C}}{4\epsilon-\hat{C}}
e^{2\hat{C} T}
\left(\|w_1(\cdot,0)\|_{L^{2}(\Omega)}^{2}
 + \|w_2(\cdot,0)\|_{L^{2}(\Omega)}^{2}\right),
\end{eqnarray*}
which implies the inequality \eqref{eq:estimatenormaH2}.

From \eqref{eq:estimatenormaL2}, \eqref{eq:estimatenormaH01} and
\eqref{eq:estimatenormaH2}, we have that the norm of $p_1$
and $p_2$ are bounded in the norm of
$H^{2}(\Omega)$. Thus, 
by the standard embedding theorem of  $H^2(\Omega)\subset L^{\infty}(\Omega)$,
we easily deduce 
\eqref{eq:estimatenormaLinfinito} and conclude the proof of the lemma.
\end{proof}

\subsection{Necessary Optimality Conditions.}

\bigskip

\begin{lemma}
Let $(\bar{\beta},\bar{\gamma})$ be the solution of the optimal control problem \eqref{eq9:OptimizacionEquivalencia},
$(\bar{S},\bar{I})$ be the solution of \eqref{eq1:suceptibles}-\eqref{eq5:inicialI} with $(\bar{\beta},\bar{\gamma})$
instead of $(\beta,\gamma)$ and $(p_{1},p_{2})$ the solution of the adjoint system \eqref{eq10:adjunto1}-\eqref{eq14:adjunto5}.
Then, the inequality
\begin{eqnarray}
\iint_{Q_{T}}\left[ \left( \hat{\beta} - \bar{\beta} \right)\bar{S}^{m}\bar{I}^{n} - \left(\hat{\gamma} - \bar{\gamma}\right)\bar{I} \right]
(p_{2}-p_{1})\,d\bx dt \nonumber
\\
+ \delta\int_{\Omega}\left[ \nabla\bar{\beta}\nabla\left(\hat{\beta}- \bar{\beta}\right) +
\nabla\bar{\gamma}\nabla\left(\hat{\gamma}- \bar{\gamma}\right) \right] \,d\bx\geq 0,
\label{teo:necessaryoptimalcondition}
\end{eqnarray}
is satisfied for any $(\hat{\beta},\hat{\gamma})\in U_{ad}$.
\end{lemma}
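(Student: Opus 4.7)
The plan is to obtain the inequality as the first-order variational condition coming from the optimality of $(\bar{\beta},\bar{\gamma})$, using the adjoint state to eliminate the linearised state from the formula for $\nabla J$.

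First, I would exploit the convexity of $U_{ad}$: for any $(\hat{\beta},\hat{\gamma})\in U_{ad}$ and any $\epsilon\in[0,1]$ the pair
\[
(\beta_\epsilon,\gamma_\epsilon):=(\bar{\beta},\bar{\gamma})+\epsilon\bigl((\hat{\beta},\hat{\gamma})-(\bar{\beta},\bar{\gamma})\bigr)
\]
still lies in $U_{ad}$. Let $(S_\epsilon,I_\epsilon)$ be the corresponding solution of \eqref{eq1:suceptibles}--\eqref{eq5:inicialI}, which exists and is positive and bounded below away from zero by Theorem~\ref{teo:direct_problem} and Corollary~\ref{corolar:bounded}. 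Since $(\bar{\beta},\bar{\gamma})$ is a minimiser, $\epsilon\mapsto J(\beta_\epsilon,\gamma_\epsilon)$ attains its minimum on $[0,1]$ at $\epsilon=0$, so that the right derivative at $0$, if it exists, is nonnegative.

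Next I would linearise. Set $\tilde S=\partial_\epsilon S_\epsilon|_{\epsilon=0}$ and $\tilde I=\partial_\epsilon I_\epsilon|_{\epsilon=0}$, write $\hat{\beta}-\bar{\beta}=:\delta\beta$, $\hat{\gamma}-\bar{\gamma}=:\delta\gamma$, and formally differentiate the direct problem. Using Corollary~\ref{corolar:bounded} (so that $\bar{S}^{m-1}$ and $\bar{I}^{n-1}$ remain bounded even though $m,n\in ]0,1[$) one obtains the linearised system
\begin{align*}
\tilde S_t-\Delta\tilde S &= -m\bar{\beta}\bar{S}^{m-1}\bar{I}^{n}\tilde S - n\bar{\beta}\bar{S}^{m}\bar{I}^{n-1}\tilde I+\bar{\gamma}\tilde I-(\delta\beta)\bar{S}^{m}\bar{I}^{n}+(\delta\gamma)\bar{I},\\
\tilde I_t-\Delta\tilde I &= \phantom{-}m\bar{\beta}\bar{S}^{m-1}\bar{I}^{n}\tilde S + n\bar{\beta}\bar{S}^{m}\bar{I}^{n-1}\tilde I-\bar{\gamma}\tilde I+(\delta\beta)\bar{S}^{m}\bar{I}^{n}-(\delta\gamma)\bar{I},
\end{align*}
with homogeneous Neumann boundary data and vanishing initial data. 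Standard parabolic estimates yield the well-posedness of this linear system, which legitimises the differentiation of the control-to-state map. Differentiating \eqref{eq8:Optimizacion} then gives
\[
\frac{d}{d\epsilon}J(\beta_\epsilon,\gamma_\epsilon)\Big|_{\epsilon=0}
=\int_{\Omega}\bigl[(\bar{S}(T)-S^{obs})\tilde S(T)+(\bar{I}(T)-I^{obs})\tilde I(T)\bigr]\,d\bx
+\delta\int_\Omega\bigl[\nabla\bar{\beta}\cdot\nabla\delta\beta+\nabla\bar{\gamma}\cdot\nabla\delta\gamma\bigr]\,d\bx\ge 0.
\]

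The heart of the proof is to rewrite the first integral using the adjoint system \eqref{eq10:adjunto1}--\eqref{eq14:adjunto5}. I would multiply the first linearised equation by $p_1$, the second by $p_2$, add, integrate over $Q_T$, and use Green's formula in space together with integration by parts in time. The Neumann conditions on $\tilde S,\tilde I,p_1,p_2$ kill every boundary term in space; the initial data for $\tilde S,\tilde I$ vanish and the terminal data \eqref{eq13:adjunto4}--\eqref{eq14:adjunto5} exactly reproduce $\int_\Omega[(\bar{S}(T)-S^{obs})\tilde S(T)+(\bar{I}(T)-I^{obs})\tilde I(T)]\,d\bx$. The adjoint equations were designed so that the terms multiplying $\tilde S$ and $\tilde I$ in the interior collapse, leaving only the source terms, and the identity reduces to
\[
\int_\Omega[(\bar{S}(T)-S^{obs})\tilde S(T)+(\bar{I}(T)-I^{obs})\tilde I(T)]\,d\bx
=\iint_{Q_T}\bigl[(\delta\beta)\bar{S}^{m}\bar{I}^{n}-(\delta\gamma)\bar{I}\bigr](p_2-p_1)\,d\bx\,dt.
\]
Substituting this into the inequality $J'(0)\ge 0$ yields exactly \eqref{teo:necessaryoptimalcondition}.

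The main obstacle I anticipate is the rigorous justification of the differentiation of $\epsilon\mapsto(S_\epsilon,I_\epsilon)$ at $\epsilon=0$: the reaction nonlinearity $\beta S^mI^n$ with $m,n\in]0,1[$ is only Hölder, not Lipschitz, near zero, so one cannot directly apply an implicit function argument. This is circumvented using the strict positivity provided by Corollary~\ref{corolar:bounded}, on which $s\mapsto s^m$ and $s\mapsto s^n$ are smooth; on the compact interval $[\mathbb{S}_m,\mathbb{S}_M]\times[\mathbb{I}_m,\mathbb{I}_M]$ the nonlinearity is $C^\infty$, so the standard differentiability of the control-to-state map for semilinear parabolic systems applies and the argument goes through unchanged.
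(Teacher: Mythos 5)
Your proposal is correct and follows essentially the same route as the paper: a convex perturbation of the optimal pair inside $U_{ad}$, linearisation of the state system around $(\bar{S},\bar{I})$, and elimination of the linearised state from $\frac{d}{d\varepsilon}J|_{\varepsilon=0}\ge 0$ via the duality identity obtained by integrating $\partial_t(p_1z_1+p_2z_2)$ over $Q_T$ with the Neumann, initial and terminal conditions. Your remark that differentiability of the control-to-state map survives the H\"older nonlinearity $S^mI^n$ only because of the uniform positive lower bounds of Corollary~\ref{corolar:bounded} is a point the paper leaves implicit, and your derivative formula correctly carries the factor $\bar{S}(\cdot,T)-S^{obs}$ without the spurious absolute-value signs appearing in the paper's display.
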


\begin{proof}
Let us consider an arbitrary pair $(\hat{\beta},\hat{\gamma})\in U_{ad}$
and introduce the notation
\begin{eqnarray*}
(\beta^{\varepsilon},\gamma^{\varepsilon}) 
&=&(1-\varepsilon)(\bar{\beta},\bar{\gamma})
+\varepsilon(\hat{\beta},\hat{\gamma})\in U_{ad},
\\
J_{\varepsilon}&=&J(\beta^{\varepsilon},\gamma^{\varepsilon})
= \frac{1}{2}\int_{\Omega}
\left( \left| S^{\varepsilon}(\bx,t)-
S^{obs}(\bx) \right|^{2} 
+ \left| I^{\varepsilon}(\bx,t)- I^{obs}(\bx) \right|^{2} 
\right)\, d\bx
\\
&&
\hspace{1cm}
+ \frac{\delta}{2}\int_{\Omega}\left( \left|\nabla\beta^{\varepsilon}(\bx)\right|^{2} +
\left|\nabla\gamma^{\varepsilon}(\bx)\right|^{2} \right)\, d\bx ,
\end{eqnarray*}
where $(S^{\varepsilon},I^{\varepsilon})$ is the solution 
of \eqref{eq1:suceptibles}-\eqref{eq5:inicialI} with
$(\beta^{\varepsilon},\gamma^{\varepsilon})$ instead of $(\beta,\gamma)$.
Now, using the hypothesis that $(\bar{\beta},\bar{\gamma})$ 
is an optimal solution of \eqref{eq9:OptimizacionEquivalencia}
and  taking the Fr\'echet derivative of $J_{\varepsilon}$, we
have that
\begin{eqnarray}
\frac{dJ_{\varepsilon}}{d\varepsilon}\Big{|}_{\varepsilon =0}
&= &\int_{\Omega}\left( \left|S^{\varepsilon}(\bx,t)-S^{obs}(\bx) \right|
\frac{\partial S^{\varepsilon}}{\partial\varepsilon}\Big{|}_{\varepsilon =0} +
\left|I^{\varepsilon}(\bx,t)-I^{obs}(\bx) \right|
\frac{\partial I^{\varepsilon}}{\partial\varepsilon}\Big{|}_{\varepsilon =0} \right)\,d\bx  \nonumber
\\
&&+\delta\int_{\Omega}\left[ \nabla\bar{\beta}\nabla\left(\hat{\beta}- \bar{\beta}\right) +
\nabla\bar{\gamma}\nabla\left(\hat{\gamma}- \bar{\gamma}\right) \right] \,d\bx\geq 0,
\label{eq:Frechetderivative}
\end{eqnarray}
where $\partial_{\varepsilon}S^{\varepsilon}$ and  $\partial_{\varepsilon}I^{\varepsilon}$
for $\varepsilon=0$ are calculated by analyzing the sensitivities 
of solutions for \eqref{eq1:suceptibles}-\eqref{eq5:inicialI} 
with respect to perturbations of $(\beta,\gamma)$.

From the definition of $(S^{\varepsilon},I^{\varepsilon})$ and $(\bar{S},\bar{I})$ 
we have that
\begin{align}
(S^{\varepsilon})_t - \Delta S^{\varepsilon} 
& = -\beta^{\varepsilon}(\bx)(S^{\varepsilon})^{m}(I^{\varepsilon})^{n}
+\gamma^{\varepsilon}(\bx)I^{\varepsilon}, 
&&  \mbox{in } Q_{T},
\label{eq1:suceptiblesepsilon}
\\
(I^{\varepsilon})_t - \Delta I^{\varepsilon} 
& = \beta^{\varepsilon}(\bx)(S^{\varepsilon})^{m}(I^{\varepsilon})^{n}
-\gamma^{\varepsilon}(\bx)I^{\varepsilon}, 
&&  \mbox{in } Q_{T},
\label{eq2:infectadosepsilon}
\\
\nabla S^{\varepsilon}\cdot \bn
&=\nabla I^{\varepsilon}\cdot \bn=0,  
&& \mbox{on }\Gamma,
\label{eq3:fronteraepsilon}
\\
S^{\varepsilon}(\bx,0)&=S_{0}(\bx),\quad 
I^{\varepsilon}(\bx,0)=I_{0}(\bx), && \mbox{in }\Omega,
\label{eq4:inicialSepsilon}
\end{align}
and 
\begin{align}
(\bar{S})_t - \Delta \bar{S} 
& = -\bar{\beta}(\bx)(\bar{S})^{m}(\bar{I})^{n}+\bar{\gamma}(\bx)\bar{I}, 
&&  \mbox{in }\in Q_{T},
\label{eq1:suceptiblesbarra}
\\
(\bar{I})_t - \Delta \bar{I} 
& = \bar{\beta}(\bx)(\bar{S})^{m}(\bar{I})^{n}
-\bar{\gamma}(\bx)\bar{I}, 
&& \mbox{in }\in Q_{T},
\label{eq2:infectadosbarra}
\\
\nabla \bar{S}\cdot \bn
&=\nabla \bar{I}\cdot \bn=0,  
&& \mbox{on }\Gamma,
\label{eq3:fronterabarra}
\\
\bar{S}(\bx,0)&=S_{0}(\bx),\quad 
\bar{I}(\bx,0)=I_{0}(\bx), && \mbox{in }\Omega.
\label{eq4:inicialSbarra}
\end{align}
Subtracting the system \eqref{eq1:suceptiblesbarra}-\eqref{eq4:inicialSbarra} 
from \eqref{eq1:suceptiblesepsilon}-\eqref{eq4:inicialSepsilon},
dividing by $\varepsilon$ and using the notation 
$\left( z_{1}^{\varepsilon},z_{2}^{\varepsilon} \right)
=\varepsilon^{-1}\left( S^{\varepsilon}-\bar{S},I^{\varepsilon}-\bar{I} \right)$, 
we deduce the following system
\begin{align}
(z^\varepsilon_1)_t - \Delta z^\varepsilon_1 
& = -\beta^\varepsilon(\bx)
\frac{\left[ (S^{\varepsilon})^{m}-(\bar{S})^{m} \right]}{S^{\varepsilon}-\bar{S}}
(I^{\varepsilon})^{n}z^\varepsilon_1
-\beta^\varepsilon(\bx)(\bar{S})^{m}
\frac{\left[ (I^{\varepsilon})^{n} - (\bar{I})^{n} \right]}
{I^{\varepsilon} - \bar{I}} z^{\varepsilon}_2 
&&
\nonumber\\
&
\quad
-(\hat{\beta}-\bar{\beta})(\bar{S})^{m}(\bar{I})^{n} 
+ \gamma^{\varepsilon}(\bx)z^{\varepsilon}_2 
+ (\hat{\gamma} - \bar{\gamma})\bar{I},
&&  \mbox{in } Q_{T},
\label{eq1:suceptiblesdif}
\\
(z^\varepsilon_2)_t - \Delta z^\varepsilon_2 
& = \beta^\varepsilon(\bx)
\frac{\left[ (S^{\varepsilon})^{m}-(\bar{S})^{m} \right]}{S^{\varepsilon}-\bar{S}}
(I^{\varepsilon})^{n}z^\varepsilon_1
+\beta^\varepsilon(\bx)(\bar{S})^{m}
\frac{\left[ (I^{\varepsilon})^{n} - (\bar{I})^{n} \right]}
{I^{\varepsilon} - \bar{I}} z_{2}^{\varepsilon} 
&&
\nonumber\\
&
\quad
+(\hat{\beta}-\bar{\beta})(\bar{S})^{m}(\bar{I})^{n} 
- \gamma^{\varepsilon}(\bx)z^{\varepsilon}_2 
- (\hat{\gamma} - \bar{\gamma})\bar{I},
&&  \mbox{in } Q_{T},
\label{eq2:infectadosdif}
\\
\nabla z^\varepsilon_1\cdot \bn
&=\nabla z^\varepsilon_2\cdot \bn=0,  
&& \mbox{on }\Gamma,
\label{eq3:fronteradif}
\\
z^\varepsilon_1(\bx,0)&=
z^\varepsilon_2(\bx,0)=0, && \mbox{in }\Omega.
\label{eq4:inicialSdif}
\end{align}
Then, denoting by $(z_{1},z_{2})$ the limit of $(z_{1}^{\varepsilon},z_{2}^{\varepsilon})$ when $\varepsilon\to 0$, from
\eqref{eq1:suceptiblesdif}-\eqref{eq4:inicialSdif}, we deduce that
\begin{align}
(z_1)_t - \Delta z_1 
& = -m\bar{\beta}(\bx)(\bar{S})^{m-1}(\bar{I})^n z_1
-n\bar{\beta}(\bx) (\bar{S})^m(\bar{I})^{n-1} z_2 
&&
\nonumber\\
&
\quad
-(\hat{\beta}-\bar{\beta})(\bar{S})^{m}(\bar{I})^{n} 
+ \bar{\gamma}(\bx)z^{\varepsilon}_2 
+ (\hat{\gamma} - \bar{\gamma})\bar{I},
&&  \mbox{in } Q_{T},
\label{eq1:suceptibleslimit0}
\\
(z_2)_t - \Delta z_2 
& = m\bar{\beta}(\bx)(\bar{S})^{m-1}(\bar{I})^n z_1
+n\bar{\beta}(\bx) (\bar{S})^m(\bar{I})^{n-1} z_2 
&&
\nonumber\\
&
\quad
+(\hat{\beta}-\bar{\beta})(\bar{S})^{m}(\bar{I})^{n} 
- \bar{\gamma}(\bx)z^{\varepsilon}_2 
- (\hat{\gamma} - \bar{\gamma})\bar{I},
&&  \mbox{in } Q_{T},
\label{eq2:infectadoslimit0}
\\
\nabla z_1\cdot \bn
&=\nabla z_2\cdot \bn=0,  
&&\mbox{on }\Gamma,
\label{eq3:fronteralimit0}
\\
z_1(\bx,0)&=
z_2(\bx,0)=0, && \mbox{in }\Omega.
\label{eq4:inicialSlimit0}
\end{align}
Thus, in \eqref{eq:Frechetderivative} we have that
\begin{eqnarray}
\frac{dJ_{\varepsilon}}{d\varepsilon}\Big{|}_{\varepsilon =0}
&= &\int_{\Omega}\Big( \left|S^{\varepsilon}(\cdot,t)-S^{obs} \right|z_1(\cdot,t)+
\left|I^{\varepsilon}(\cdot,t)-I^{obs}\right|z_2(\cdot,t)
 \Big)\,d\bx  \nonumber
\\
&&+\delta\int_{\Omega}\left[ \nabla\bar{\beta}\nabla\left(\hat{\beta}- \bar{\beta}\right) +
\nabla\bar{\gamma}\nabla\left(\hat{\gamma}- \bar{\gamma}\right) \right] \,d\bx\geq 0,
\label{eq:replaceinderivativeFrechet}
\end{eqnarray}
where $(z_{1},z_{2})$ is the solution of 
\eqref{eq1:suceptibleslimit0}-\eqref{eq4:inicialSlimit0}.

On the other hand, from
\eqref{eq10:adjunto1}-\eqref{eq11:adjunto2} and 
\eqref{eq1:suceptibleslimit0}-\eqref{eq2:infectadoslimit0}, we deduce that
\begin{align*}
\frac{\partial}{\partial t}(p_{1}z_{1} + p_{2}z_{2}) =
p_{1}\Delta z_{1} + p_{2}\Delta z_{2} - z_{1}\Delta p_{1} 
- z_{2}\Delta p_{2}
+ (\hat{\beta} - \bar{\beta})\bar{S}^{m}\bar{I}^{n}(p_{2}-p_{1}) 
- (\hat{\gamma} - \bar{\gamma})\bar{I}(p_{2}-p_{1}),
\end{align*}
which implies that
\begin{eqnarray}
\iint_{Q_{T}}\frac{\partial}{\partial t}(p_{1}z_{1} + p_{2}z_{2})d\bx dt =
\iint_{Q_{T}}\left[ (\hat{\beta} - \bar{\beta})\bar{S}^{m}\bar{I}^{n} 
- (\hat{\gamma} - \bar{\gamma})\bar{I} \right](p_{2}-p_{1}) d\bx dt,
\label{eq:integratingonFrechet}
\end{eqnarray}
by integration on $Q_{T}$.
Moreover, we notice that
\begin{align}
&\iint_{Q_{T}}\frac{\partial}{\partial t}(p_{1}z_{1} + p_{2}z_{2})d\bx dt =
\int_{\Omega}\Big( p_{1}(\bx,T)z_{1}(\bx,T) 
+ p_{2}(\bx,T)z_{2}(\bx,T) \Big)\,d\bx
\nonumber\\
&
\hspace{1cm}
= \int_{\Omega}\Big( \left| \bar{S}(\bx,T)-S^{obs}(\bx) \right|z_{1}(\bx,T) +
\left| \bar{I}(\bx,T)-I^{obs}(\bx) \right|z_{2}(\bx,T) \Big)\,d\bx.
\label{eq:integratingonFrechet333}
\end{align}
Then, from \eqref{eq:integratingonFrechet} and \eqref{eq:integratingonFrechet333}
we deduce that
\begin{align}
&\iint_{Q_{T}}\left[ (\hat{\beta} - \bar{\beta})\bar{S}^{m}\bar{I}^{n} 
- (\hat{\gamma} - \bar{\gamma})\bar{I} \right](p_{2}-p_{1}) d\bx dt
\nonumber\\
&
\hspace{2cm}
 = \int_{\Omega}\Big( \left| \bar{S}(\bx,T)-S^{obs}(\bx) \right|z_{1}(\bx,T) +
\left| \bar{I}(\bx,T)-I^{obs}(\bx) \right|z_{2}(\bx,T) \Big)\,d\bx.
\label{eq:integratingonFrechet333_88}
\end{align}
We can conclude the proof of \eqref{teo:necessaryoptimalcondition}
by replacing \eqref{eq:integratingonFrechet333_88} 
in the first term of~\eqref{eq:replaceinderivativeFrechet}.
\end{proof}

\subsection{Some stability results}

\begin{lemma}
\label{lem:unicidad2}
Consider that the sets of functions $\{S,I,p_1,p_2\}$ and 
$\{\hat{S},\hat{I},\hat{p}_1,\hat{p}_2\}$ are solutions
to the systems \eqref{eq1:suceptibles}-\eqref{eq5:inicialI} and
\eqref{eq10:adjunto1}-\eqref{eq14:adjunto5}
with the data $\{\beta,\gamma,S^{obs},I^{obs}\}$ 
and $\{\hat{\beta},\hat{\gamma},\hat{S}^{obs},\hat{I}^{obs}\},$
respectively. Then, there exist the positive constants $\Psi_i,i=1,2,3$ such  that
the estimates
\begin{eqnarray}
&&\|(\hat{S}-S)(\cdot,t) \|_{L^{2}(\Omega)}^{2}
 + \|(\hat{I}-I)(\cdot,t) \|_{L^{2}(\Omega)}^{2} 
 \le
\Psi_1\Big(  \|\hat{\beta}-\beta\|_{L^{2}(\Omega)}^{2} 
+ \|\hat{\gamma}-\gamma\|_{L^{2}(\Omega)}^{2} \Big),
\label{eq:estimation_directo}
\\
&&\|(\hat{p}_1-p_1)(\cdot,t) \|_{L^{2}(\Omega)}^{2}
 + \|(\hat{p}_2-p_2)(\cdot,t) \|_{L^{2}(\Omega)}^{2} 
  \le
\Psi_2\Big(  \|\hat{\beta}-\beta\|_{L^{2}(\Omega)}^{2} 
+ \|\hat{\gamma}-\gamma\|_{L^{2}(\Omega)}^{2} \Big)
\nonumber\\
&& 
\hspace{3cm}
+
\Psi_3\Big(  \|\hat{S}^{obs}-S^{obs}\|_{L^{2}(\Omega)}^{2} 
+ \|\hat{I}^{obs}-I^{obs}\|_{L^{2}(\Omega)}^{2} \Big)
\label{eq:estimation_adjunto}
\end{eqnarray}
holds for any $t\in [0,T]$.
\end{lemma}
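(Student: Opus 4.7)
The plan is to derive both estimates by standard energy arguments applied to the difference systems, exploiting the positive lower and upper bounds on $S,I,\hat S,\hat I$ from Corollary~\ref{corolar:bounded} to linearize the power nonlinearities.

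For \eqref{eq:estimation_directo} I would set $\widetilde S=\hat S-S$ and $\widetilde I=\hat I-I$ and subtract the two copies of \eqref{eq1:suceptibles}-\eqref{eq5:inicialI}. The right-hand side of the equation for $\widetilde S_t-\Delta\widetilde S$ rewrites as
\begin{align*}
-(\hat\beta-\beta)\hat S^{m}\hat I^{n}
-\beta(\hat S^{m}-S^{m})\hat I^{n}
-\beta S^{m}(\hat I^{n}-I^{n})
+(\hat\gamma-\gamma)\hat I
+\gamma\widetilde I,
\end{align*}
and analogously for $\widetilde I$. Since $S,\hat S\in[\mathbb{S}_m,\mathbb{S}_M]$ and $I,\hat I\in[\mathbb{I}_m,\mathbb{I}_M]$, the maps $s\mapsto s^{m}$ and $s\mapsto s^{n}$ are Lipschitz on the relevant intervals, so $|\hat S^{m}-S^{m}|\le L_m|\widetilde S|$ and $|\hat I^{n}-I^{n}|\le L_n|\widetilde I|$ with constants depending only on $\mathbb{S}_m,\mathbb{S}_M,\mathbb{I}_m,\mathbb{I}_M,m,n$. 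Multiplying the equations by $\widetilde S$ and $\widetilde I$ respectively, integrating over $\Omega$, using the homogeneous Neumann boundary condition and Cauchy's inequality, I obtain
\begin{equation*}
\tfrac12\tfrac{d}{dt}\bigl(\|\widetilde S\|_{L^2}^2+\|\widetilde I\|_{L^2}^2\bigr)
\le C_1\bigl(\|\widetilde S\|_{L^2}^2+\|\widetilde I\|_{L^2}^2\bigr)
+C_2\bigl(\|\hat\beta-\beta\|_{L^2}^2+\|\hat\gamma-\gamma\|_{L^2}^2\bigr),
\end{equation*}
where $C_1,C_2$ depend on $\overline b,\overline r$ and the bounds of Corollary~\ref{corolar:bounded}. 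Since $\widetilde S(\cdot,0)=\widetilde I(\cdot,0)=0$, Gronwall's inequality yields \eqref{eq:estimation_directo} with $\Psi_1=C_2 T e^{2C_1 T}$.

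For \eqref{eq:estimation_adjunto} the same strategy applies to $\widetilde p_i=\hat p_i-p_i$, but the system is now backward and has source terms involving the coefficient differences as well as the direct-problem differences. Performing the time reversal $\tau=T-t$ used in Lemma~\ref{lem:boundedp_i}, the difference system becomes a forward parabolic system with initial data $\widetilde p_i(\cdot,0)=(\hat S(\cdot,T)-\bar S(\cdot,T))-(\hat S^{obs}-S^{obs})$ (and analogously for $\widetilde p_2$), and its right-hand side contains, besides terms linear in $\widetilde p_1,\widetilde p_2$, additional terms of the form $\bigl(m\hat\beta\hat S^{m-1}\hat I^{n}-m\beta S^{m-1}I^{n}\bigr)(p_1-p_2)$, etc. Using again Lipschitz estimates for $s\mapsto s^{m-1},s^{n-1}$ on the intervals $[\mathbb{S}_m,\mathbb{S}_M],[\mathbb{I}_m,\mathbb{I}_M]$ (this is where the strict positivity $\mathbb{S}_m,\mathbb{I}_m>0$ is crucial), together with the $L^\infty$-bounds on $p_1,p_2$ from \eqref{eq:estimatenormaLinfinito}, all such ``coefficient mismatch'' terms can be bounded by $C\bigl(\|\widetilde S\|_{L^2}^2+\|\widetilde I\|_{L^2}^2+\|\hat\beta-\beta\|_{L^2}^2+\|\hat\gamma-\gamma\|_{L^2}^2\bigr)$. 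Multiplying by $\widetilde p_1,\widetilde p_2$, integrating, applying Cauchy and Gronwall as before, and using \eqref{eq:estimation_directo} to absorb the $\|\widetilde S\|_{L^2}^2+\|\widetilde I\|_{L^2}^2$ contribution, produces \eqref{eq:estimation_adjunto} with suitable $\Psi_2,\Psi_3$.

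The main obstacle is the non-Lipschitz behavior of the power nonlinearities $S^{m},I^{n}$ (and especially $S^{m-1},I^{n-1}$ in the adjoint) at zero, since $m,n\in(0,1)$. This is resolved entirely by Corollary~\ref{corolar:bounded}, which guarantees that all solutions stay in a compact subset of $(0,\infty)\times(0,\infty)$ on which these maps are smooth; without the hypothesis \eqref{eq:hipo_initial_condition} giving $\phi_0>0$, the estimate would fail. The rest reduces to routine parabolic energy estimates and Gronwall's lemma.
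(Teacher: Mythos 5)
Your proposal is correct and follows essentially the same route as the paper: energy estimates for the difference systems, the mean-value/Lipschitz bounds on $s\mapsto s^{m},s^{n},s^{m-1},s^{n-1}$ made legitimate by the positive bounds of Corollary~\ref{corolar:bounded}, the $L^\infty$ bounds on $p_1,p_2$ from Lemma~\ref{lem:boundedp_i}, Gronwall's inequality, and the use of \eqref{eq:estimation_directo} together with the terminal conditions to close \eqref{eq:estimation_adjunto}. The paper merely organizes the adjoint decomposition through an explicit algebraic identity and integrates backward on $[t,T]$ instead of reversing time, which is equivalent to your argument.
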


\begin{proof}
For the sake of simplicity of the presentation,
we introduce the following notations
\begin{align*}
&\delta S=\hat{S} - S, 
&& \delta p_1=\tilde{p}_{1} - p_{2}, 
&& \delta\beta=\hat{\beta} - \beta
\\
&\delta I=\hat{I} - I, 
&&  \delta p_2=\hat{p}_{2} - p_{2},
&&\delta\gamma=\hat{\gamma} - \gamma.
\end{align*}
Then, from the system
\eqref{eq1:suceptibles}-\eqref{eq5:inicialI} 
for $(S,I)$ and $(\hat{S},\hat{S})$ we have 
that $(\delta S,\delta I)$ satisfy
the system
\begin{align}
(\delta S)_t - \Delta (\delta S) 
& = -\hat{\beta}(\bx)
\Big[(\hat{S})^{m}(\hat{I})^{n}-(S)^{m}(I)^{n}\Big]
&&
\nonumber
\\
&
\hspace{1cm}
-\delta\beta(\bx)(\hat{S})^{m}(\hat{I})^{n}
+\hat{\gamma}(\bx)\delta I+\gamma(\bx) I, 
&&  \mbox{in }\in Q_{T},
\label{eq1:suceptibles:delta}
\\
(\delta I)_t - \Delta (\delta I) 
& = \hat{\beta}(\bx)
\Big[(\hat{S})^{m}(\hat{I})^{n}-(S)^{m}(I)^{n}\Big]
&&
\nonumber
\\
&
\hspace{1cm}
+\delta\beta(\bx)(\hat{S})^{m}(\hat{I})^{n}
-\hat{\gamma}(\bx)\delta I-\gamma(\bx) I, 
&&  \mbox{in }\in Q_{T},
\label{eq2:infectados:delta}
\\
\nabla (\delta S)\cdot \bn
&=\nabla (\delta I)\cdot \bn=0,  
&& \mbox{on }\in\Gamma,
\label{eq3:frontera:delta}
\\
(\delta S)(\bx,0)&=(\delta I)(\bx,0)=0, && 
\mbox{in }\Omega.
\label{eq4:inicial:delta}
\end{align}
Similarly, from the adjoint system \eqref{eq10:adjunto1}-\eqref{eq14:adjunto5},
we deduce that
$(\delta p_1,\delta p_2)$ is solution of the system
\begin{align}
(\delta p_1)_t + \Delta (\delta p_1) 
& = m\hat{\beta}(\bx)(\hat{S})^{m-1}(\hat{I})^{n}(\hat{p_1}-\hat{p_2})
-m\beta(\bx)(S)^{m-1}(I)^{n}(p_1-p_2), 
&&  \mbox{in } Q_{T},
\label{eq1:suceptibles:deltap}
\\
(\delta p_2)_t + \Delta (\delta p_2) 
& = n \hat{\beta}(\bx)\hat{S}^{m}\hat{I}^{n-1}(\hat{p}_1-\hat{p}_2)
-\hat{\gamma}(\bx)(\hat{p}_1-\hat{p}_2)
&&
\nonumber
\\
&
\hspace{1cm}
-n \beta(\bx)S^{m} I^{n-1}(p_1-p_2)
+\gamma(\bx)(p_1-p_2), 
&&  \mbox{in } Q_{T},
\label{eq2:infectados:deltap}
\\
\nabla (\delta p_1)\cdot \bn
&=\nabla (\delta p_2)\cdot \bn=0,  
&&  \mbox{on } \in\Gamma,
\label{eq3:frontera:deltap}
\\
(\delta p_1)(\bx,T)&=
\delta S(\bx,T) - \left( \hat{S}^{obs}(\bx) - S^{obs}(\bx) \right),
&&  \mbox{in } \Omega,
\label{eq4:inicialS:deltap}
\\
(\delta p_2)(\bx,T)&=
\delta I(\bx,T) - \left( \hat{I}^{obs}(\bx) - I^{obs}(\bx) \right),
&&  \mbox{in } \Omega.
\label{eq4:inicialI:deltap}
\end{align}
Then, the proofs of \eqref{eq:estimation_directo}
and \eqref{eq:estimation_adjunto} are reduced to get estimations
for the systems \eqref{eq1:suceptibles:delta}-\eqref{eq4:inicial:delta}
and \eqref{eq1:suceptibles:deltap}-\eqref{eq4:inicialI:deltap}, respectively.

In order to prove \eqref{eq:estimation_directo}, we test
the equations \eqref{eq1:suceptibles:delta} and 
\eqref{eq2:infectados:delta} by $\delta S$ and $\delta I$, respectively.
Then, adding the results we get
\begin{align}
&\frac{1}{2}\frac{d}{dt}
\left(
\|\delta S(\cdot,t)\|_{L^{2}(\Omega)}^{2} 
+\|\delta I(\cdot,t)\|_{L^{2}(\Omega)}^{2} 
\right) 
+ \|\nabla(\delta S)(\cdot,t)\|_{L^{2}(\Omega)}^{2} 
+\|\nabla(\delta I)(\cdot,t)\|_{L^{2}(\Omega)}^{2}
\nonumber\\
&
\quad
\le
\int_{\Omega}|\hat{\beta}(\bx)|
\Big|\hat{S}^{m}\hat{I}^{n}-S^{m}I^{n}\Big||\delta S|\, d\bx
+\int_{\Omega}|\delta\beta(\bx)||\hat{S}|^{m}\hat{I}|^{n}|\delta S|\, d\bx
+\int_{\Omega}|\hat{\gamma}(\bx)||\delta I||\delta S|\, d\bx
\nonumber\\
&
\quad\;
+\int_{\Omega}|\delta\gamma(\bx)| |I||\delta S|\, d\bx
+\int_{\Omega}|\hat{\beta}(\bx)|
\Big|\hat{S}^{m}\hat{I}^{n}-S^{m}I^{n}\Big||\delta I|\, d\bx
+\int_{\Omega}|\delta\beta(\bx)||\hat{S}|^{m}|\hat{I}|^{n}|\delta I|\, d\bx
\nonumber\\
&
\quad\;
+\int_{\Omega}|\hat{\gamma}(\bx)||\delta I|^2\, d\bx
+\int_{\Omega}|\delta\gamma(\bx)| |I||\delta S|\, d\bx
\nonumber\\
&\quad
=\sum_{j=1}^8 I_j,
\label{eq:delta:l2estimate}
\end{align}
where $I_j$ are defined by each term. 
Now, using the Corollary~\ref{corolar:bounded}
to get that
\begin{align}
|\hat{S}^{m}\hat{I}^{n}-S^{m}I^{n}|
&=|\hat{S}^{m}\hat{I}^{n}-\hat{S}^{m}I^{n}+\hat{S}^{m}I^{n}-S^{m}I^{n}|
\nonumber\\
&=\left|\hat{S}^{m}n\int_{I}^{\hat{I}}u^{n-1}du+I^{n}m
\int_{S}^{\hat{S}}u^{m-1}du\right|
\nonumber\\
&\le n|\hat{S}|^{m}\int_{I}^{\hat{I}}\mathbb{I}_m^{n-1}du
+m|I|^{n}\int_{S}^{\hat{S}}\mathbb{S}_m^{m-1}du,
\nonumber\\
&\le n\:\mathbb{S}_M^{m}\mathbb{I}_m^{n-1}|\hat{I}-I|
+m\:\mathbb{S}_m^{m-1}\mathbb{I}_M^n|\hat{S}-S|,
\label{eq:holderianadas}
\end{align}
we proceed to get the appropriate bounds for $I_j$. Indeed,
by the Cauchy inequality and \eqref{eq:holderianadas},  we have 
that $I_1$ can be bounded as follows
\begin{align*}
I_1
&
\le
\frac{n\:\overline{b}}{2}
\mathbb{S}_M^{m}\mathbb{I}_m^{n-1}
\left(
\int_{\Omega}|\delta I|^2\, d\bx
+\int_{\Omega}|\delta S|^2\, d\bx
\right)
+m\:\overline{b}
\mathbb{S}_m^{m-1}\mathbb{I}_M^n
\int_{\Omega}|\delta S|^2\, d\bx.
\end{align*}
In the case of $I_2,I_3$ and $I_4,$ we get
\begin{align*}
I_2&
\le 
\frac{1}{2}
\mathbb{S}_M^m\mathbb{I}_M^n
\left(
\int_{\Omega}|\delta \beta|^2\, d\bx
+\int_{\Omega}|\delta S|^2\, d\bx
\right),
\qquad
I_3\le \frac{\overline{r}}{2}
\left(\int_{\Omega}|\delta I|^2\, d\bx
+\int_{\Omega}|\delta S|^2\, d\bx\right),
\\
I_4&\le\frac{1}{2}
\mathbb{I}_M
\left(
\int_{\Omega}|\delta \gamma|^2\, d\bx
+\int_{\Omega}|\delta S|^2\, d\bx
\right).
\end{align*}
Similarly, we deduce that
\begin{align*}
I_5&
\le
n\:\overline{b}
\mathbb{S}_M^{m}\mathbb{I}_m^{n-1}
\int_{\Omega}|\delta I|^2\, d\bx
+\frac{m\:\overline{b}}{2}
\mathbb{S}_m^{m-1}\mathbb{I}_M^n\left(
\int_{\Omega}|\delta I|^2\, d\bx
+\int_{\Omega}|\delta S|^2\, d\bx
\right),
\\
I_6&
\le 
\frac{1}{2}\mathbb{S}_M^{m}\mathbb{I}_M^{n}
\left(
\int_{\Omega}|\delta \beta|^2\, d\bx
+\int_{\Omega}|\delta I|^2\, d\bx
\right),
\\
I_7&\le \overline{r}\int_{\Omega}|\delta I|^2\, d\bx,
\qquad
I_8\le\frac{1}{2}
\mathbb{I}_M
\left(
\int_{\Omega}|\delta \gamma|^2\, d\bx
+\int_{\Omega}|\delta I|^2\, d\bx
\right).
\end{align*}
Thus, from the estimates of $I_j$ and
\eqref{eq:delta:l2estimate} we have that
\begin{align*}
&\frac{d}{dt}
\left(
\|\delta S(\cdot,t)\|_{L^{2}(\Omega)}^{2} 
+\|\delta I(\cdot,t)\|_{L^{2}(\Omega)}^{2} 
\right) 
+ 2\Big(\|\nabla(\delta S)(\cdot,t)\|_{L^{2}(\Omega)}^{2} 
+\|\nabla(\delta I)(\cdot,t)\|_{L^{2}(\Omega)}^{2} \Big)
\\
&\qquad\qquad
\le 
D_1\Big(
\|\delta S(\cdot,t)\|_{L^{2}(\Omega)}^{2} 
+\|\delta I(\cdot,t)\|_{L^{2}(\Omega)}^{2}\Big)
+D_2\Big(\|\delta \beta\|_{L^{2}(\Omega)}^{2} 
+\|\delta \gamma\|_{L^{2}(\Omega)}^{2}\Big),
\end{align*}
where $D_1=2\hat{C}+\mathbb{I}$ with $\hat{C}$ defined on \eqref{hatCCC} 
and $D_2=\mathbb{S}^{m}_M\mathbb{I}^{n}_M+\mathbb{I}_M$.
Then, applying the Gronwall inequality, we deduce that
\begin{align*}
&
\|\delta S(\cdot,t)\|_{L^{2}(\Omega)}^{2} 
+\|\delta I(\cdot,t)\|_{L^{2}(\Omega)}^{2} 
\le 
e^{D_1T}
\Big(\|\delta S_0\|_{L^{2}(\Omega)}^{2} 
+\|\delta I_0\|_{L^{2}(\Omega)}^{2}\Big)
+
D_2T
\Big(
\|\delta \beta\|_{L^{2}(\Omega)}^{2} 
+\|\delta \gamma\|_{L^{2}(\Omega)}^{2}\Big),
\end{align*}
which implies \eqref{eq:estimation_directo} by using \eqref{eq4:inicial:delta}.

The proof of  \eqref{eq:estimation_directo} is developed as follows.
We can easily prove that the algebraic identity
\begin{align}
&  \hat{\zeta}\;\hat{\mathbb{A}}(\hat{p_1}-\hat{p_2})
-\zeta\;\mathbb{A}(p_1-p_2)
\nonumber\\
&
\quad
=
\Big(\hat{\zeta}-\zeta\Big)\hat{\mathbb{A}}\hat{p_1}
+\zeta\;\Big(\hat{\mathbb{A}}-\mathbb{A}\Big)\hat{p_1}
+\zeta\;\mathbb{A}\delta p_1
-\Big(\hat{\zeta}-\zeta\Big)\hat{\mathbb{A}}\hat{p_2}
-\zeta\;\Big(\hat{\mathbb{A}}-\mathbb{A}\Big)\hat{p_2}
-\zeta\;\mathbb{A}\delta p_2
\label{eq:ident}
\end{align}
is valid. Now, in particular, by selecting 
$(\hat{\zeta},\zeta,\hat{\mathbb{A}},\mathbb{A})
=\Big(\hat{\beta},\beta,m(\hat{S})^{m-1}(\hat{I})^{n},m(S)^{m-1}(I)^{n}\Big)$,
we have that \eqref{eq:ident} implies
that the right hand sides of equation  \eqref{eq1:suceptibles:deltap}
can be rewritten as follows
\begin{align}
&  m\hat{\beta}\;(\hat{S})^{m-1}(\hat{I})^{n}(\hat{p_1}-\hat{p_2})
-m\beta\;(S)^{m-1}(I)^{n}(p_1-p_2)
\nonumber\\
&
\qquad
=
m\delta\beta\;(\hat{S})^{m-1}(\hat{I})^{n}\hat{p_1}
+m\beta\;\Big[(\hat{S})^{m-1}(\hat{I})^{n}-(S)^{m-1}(I)^{n}\Big]\hat{p_1}
\nonumber\\
&
\qquad\quad
+m\beta\;(S)^{m-1}(I)^{n}\delta p_1
-m\delta\beta\;(\hat{S})^{m-1}(\hat{I})^{n}\hat{p_2}
\nonumber\\
&
\qquad\quad
-m\beta\;\Big[(\hat{S})^{m-1}(\hat{I})^{n}-(S)^{m-1}(I)^{n-1}\Big]\hat{p_2}
-m\beta\;(S)^{m-1}(I)^{n}\delta p_2.
\label{eq:eq1:suceptibles:deltap:ld}
\end{align}
Then, by testing \eqref{eq1:suceptibles:deltap}
by $\delta p_1$ and using \eqref{eq:eq1:suceptibles:deltap:ld}, we get
\begin{align*}
&\frac{1}{2}\frac{d}{dt}
\|\delta p_1(\cdot,t)\|^2_{L^2(\Omega)}
=\|\nabla (\delta p_1)(\cdot,t)\|^2_{L^2(\Omega)}
\\
&
\hspace{0.3cm}
+\int_{\Omega}m\delta\beta\;(\hat{S})^{m-1}(\hat{I})^{n}\hat{p_1}\delta p_1d\bx
+\int_{\Omega}
m\beta\;\Big[(\hat{S})^{m-1}(\hat{I})^{n}-(S)^{m-1}(I)^{n}\Big]\hat{p_1}\delta p_1d\bx
\\
&
\hspace{0.3cm}
+\int_{\Omega}m\beta\;(S)^{m-1}(I)^{n}(\delta p_1)^2d\bx
-\int_{\Omega}m\delta\beta\;(\hat{S})^{m-1}(\hat{I})^{n}\hat{p_2}\delta p_1d\bx
\\
&
\hspace{0.3cm}
-\int_{\Omega}
m\beta\;\Big[(\hat{S})^{m-1}(\hat{I})^{n}-(S)^{m-1}(I)^{n}\Big]\hat{p_2}\delta p_1d\bx
-\int_{\Omega}m\beta\;(S)^{m-1}(I)^{n}\delta p_1\delta p_2d\bx.
\end{align*}
From Lemma~\ref{lem:boundedp_i}, Corollary~\ref{corolar:bounded},
by using similar arguments to \eqref{eq:holderianadas}, and the Cauchy inequality
we have that
\begin{align}
&-\frac{1}{2}\frac{d}{dt}
\|\delta p_1(\cdot,t)\|^2_{L^2(\Omega)}
+\|\nabla \delta p_1(\cdot,t)\|^2_{L^2(\Omega)}
\nonumber\\
&
\quad
\le
\max\Big\{P_4,P_5\Big\}
\left\{
m\mathbb{S}_m^{m-1}\mathbb{I}_M^n
\Big(
\|\delta p_1(\cdot,t)\|^2_{L^2(\Omega)}
+\|\delta \beta\|^2_{L^2(\Omega)}
\Big)\right.
\nonumber\\
&
\qquad\qquad
+ 
mn\overline{b}
\mathbb{S}_m^{m-1}\mathbb{I}_m^{n-1}
\Big(
\|\delta p_1(\cdot,t)\|^2_{L^2(\Omega)}
+\|\delta I(\cdot,t)\|^2_{L^2(\Omega)}
\Big)
\nonumber\\
&
\qquad\qquad \left.
+ 
m|m-1|\overline{b}
\mathbb{S}_M^{m-2}\mathbb{I}_M^n
\Big(
\|\delta p_1(\cdot,t)\|^2_{L^2(\Omega)}
+\|\delta S(\cdot,t)\|^2_{L^2(\Omega)}
\Big)\right\}
\nonumber\\
&
\quad\quad 
+\frac{m\overline{b}}{2}
\mathbb{S}_m^{m-1}\mathbb{I}_M^n
\left(3\|\delta p_1(\cdot,t)\|^2_{L^2(\Omega)}+\|\delta p_2(\cdot,t)\|^2_{L^2(\Omega)}
\right).
\label{eq:estimadeltap1}
\end{align}
Now, from \eqref{eq:ident}, by selecting
$(\hat{\zeta},\zeta,\hat{\mathbb{A}},\mathbb{A})
=\Big(\hat{\beta},\beta,n(\hat{S})^m(\hat{I})^{n-1},n(S)^m(I)^{n-1}\Big)$
and 
$(\hat{\zeta},\zeta,\hat{\mathbb{A}},\mathbb{A})
=\Big(\hat{\gamma},\gamma,1,1),$ we can rewritten
the right hand side of equation  \eqref{eq2:infectados:deltap}. Then, 
testing \eqref{eq1:suceptibles:deltap}
by $\delta p_2$ and using similar arguments 
we get a similar estimate to \eqref{eq:estimadeltap1}.
Thus, we have that there exist the positive constants $\tilde{E}_i,$ $i=1,2,3,$ such that
\begin{align*}
&-\frac{d}{dt}\left(
\|\delta p_1(\cdot,t)\|^2_{L^2(\Omega)}+\|\delta p_2(\cdot,t)\|^2_{L^2(\Omega)}
\right)
+2\Big(\|\nabla \delta p_1(\cdot,t)\|^2_{L^2(\Omega)}
+\|\nabla \delta p_2(\cdot,t)\|^2_{L^2(\Omega)}\Big)
\\
&\qquad
\le 
\tilde{E}_1\Big(\|\delta p_1(\cdot,t)\|^2_{L^2(\Omega)}
+\|\delta p_2(\cdot,t)\|^2_{L^2(\Omega)}\Big)
\\
&\qquad\quad
+\tilde{E}_2\Big(\|\delta S(\cdot,t)\|^2_{L^2(\Omega)}
+\|\delta I(\cdot,t)\|^2_{L^2(\Omega)}\Big)
+\tilde{E}_3\Big(\|\delta \beta\|^2_{L^2(\Omega)}+\|\delta \gamma\|^2_{L^2(\Omega)}\Big).
\end{align*}
Applying the estimate \eqref{eq:estimation_directo} 
and rearranging some terms we deduce that
\begin{align*}
-\frac{d}{dt}\left(e^{\tilde{E}_1 t}
\Big[\|\delta p_1(\cdot,t)\|^2_{L^2(\Omega)}+\|\delta p_2(\cdot,t)\|^2_{L^2(\Omega)}\Big]
\right)
\le 
(\tilde{E}_2\Psi_1+\tilde{E}_3)
\Big(\|\delta \beta\|^2_{L^2(\Omega)}+\|\delta \gamma\|^2_{L^2(\Omega)}\Big),
\end{align*}
and integrating on $[t,T]$ we have that
\begin{align*}
e^{\tilde{E}_1 t}\Big[
\|\delta p_1(\cdot,t)\|^2_{L^2(\Omega)}+\|\delta p_2(\cdot,t)\|^2_{L^2(\Omega)}\Big]
&\le
e^{\tilde{E}_1 T}\Big[
\|\delta p_1(\cdot,T)\|^2_{L^2(\Omega)}+\|\delta p_2(\cdot,T)\|^2_{L^2(\Omega)}\Big]
\\
&\quad
+T(\tilde{E}_2\Psi_1+\tilde{E}_3)e^{\tilde{C}_1 T}
\Big(\|\delta \beta\|^2_{L^2(\Omega)}+\|\delta \gamma\|^2_{L^2(\Omega)}\Big).
\end{align*}
Hence, we can deduce \eqref{eq:estimation_adjunto} by application of 
the end condition \eqref{eq4:inicialI:deltap}.
\end{proof}

\section{Proof of Theorem \ref{teo:inverso}}

\noindent
{\it Existence.}
We can prove the existence by considering the standard strategy of a minimizing sequence
and using the appropriate compactness inclusions. Indeed, 
we clearly note that $U_{ad}(\Omega)\not=\emptyset$ and  $J(\beta ,\gamma)$ is bounded for 
any $(\beta ,\gamma)\in U_{ad}(\Omega).$. Then, we can 
consider that $\{(\beta_n,\gamma_n)\}\subset \mathcal{U}$
is a minimizing sequence of $J$.
Then, the 
compact embedding  
$H^{\intpart{d/2}+1}(\Omega)\subset C^\alpha (\Omega)$ for $\alpha\in ]0,1/2]$,
implies that the minimizing sequence  
$\{(\beta_n,\gamma_n)\}$ is bounded in 
the strong topology of $ C^{\alpha}(\overline{\Omega})\times C^{\alpha}(\overline{\Omega})$
for all $\alpha\in ]0,1/2],$ since
there exists a positive constant $C$ (independent of $\beta,\gamma$ and $n$)
such that 
\begin{align*}
\|\beta_n\|_{C^{\alpha}(\overline{\Omega})}+\|\gamma_n\|_{C^{\alpha}(\overline{\Omega})}
\le 
C\Big(\|\beta_n\|_{H^{\intpart{d/2}+1}(\Omega)}
+\|\gamma_n\|_{H^{\intpart{d/2}+1}(\Omega)}\Big),
\quad \forall \alpha\in ]0,1/2].
\end{align*}
Notice  that 
the right hand is bounded by 
the fact that 
$\beta_n,\gamma_n\in H^{\intpart{d/2}+1}(\Omega)$, see 
the definition of $U_{ad}(\Omega)$ given on \eqref{eq:admissible}.
Now, let us denote by $(S_n,I_n)$ the solution of  
the initial boundary value problem \eqref{eq1:suceptibles}-\eqref{eq5:inicialI} 
corresponding to $(\beta_n,\gamma_n)$. Then, by considering the fact that
$\{(\beta_n,\gamma_n)\}$ is belong to
$ C^{\alpha}(\overline{\Omega})\times C^{\alpha}(\overline{\Omega})$
for all $\alpha\in ]0,1/2]$,
by Theorem~\ref{teo:direct_problem}, we have that  
$S_n$ and $I_n$ are  belong to the H\"older space 
$C^{2+\alpha,1+\frac{\alpha}{2}}(\overline{Q}_T)$
and also
$\{(S_n,I_n)\}$ is a  bounded sequence in the strong topology of
$C^{2+\alpha,1+\frac{\alpha}{2}}(\overline{Q}_T)
\times C^{2+\alpha,1+\frac{\alpha}{2}}(\overline{Q}_T)$
for all $\alpha\in ]0,1/2]$. Thus,
the boundedness of the minimizing sequence and the corresponding
sequence $\{(S_n,I_n)\}$, implies that
there exist 
\begin{align*}
(\overline{\beta},\overline{\gamma})\in 
\Big[C^{1/2}(\Omega)\times C^{1/2}(\Omega)\Big]\cap U_{ad}(\Omega),
\qquad
(\overline{S},\overline{T})\in C^{2+\frac{1}{2},1+\frac{1}{4}}(\overline{Q}_T)
\times
C^{2+\frac{1}{2},1+\frac{1}{4}}(\overline{Q}_T),
\end{align*}
and the subsequences  again labeled by  $\{(\beta_n,\gamma_n)\}$
and $\{(S_n,I_n)\}$ such that 
\begin{align}
&\beta_n \to\overline{\beta}, \quad  \gamma_n \to\overline{\gamma}
\quad\mbox{uniformly on }
 C^{\alpha}(\Omega),
\\
& S_n \to\overline{S},\quad  I_n \to\overline{I}
\quad\mbox{uniformly on }C^{\alpha,\frac{\alpha}{2}}(\overline{Q}_T)
\cap 
C^{2+\alpha,1+\frac{\alpha}{2}}(\overline{Q}_T).
\end{align}
Moreover, we can deduce that $(\overline{S},\overline{I})$ is the solution of  
the initial boundary value problem \eqref{eq1:suceptibles}-\eqref{eq5:inicialI}
corresponding to the coefficients $(\overline{\beta},\overline{\gamma}).$
Hence, by Lebesgue's dominated convergence theorem,
the weak lower-semicontinuity of $L^2$ norm,
and the definition of the minimizing sequence, we have that
\begin{align}
J(\overline{\beta},\overline{\gamma})\le \lim_{n\to\infty} J(\beta_n,\gamma_n)=
\inf_{(\beta,\gamma)\in U_{ad}(\Omega)}J(\beta,\gamma).
\end{align}
Then, $(\overline{\beta},\overline{\gamma})$ is a solution of 
\eqref{eq9:OptimizacionEquivalencia} and the prove of existence 
is concluded.

\vspace{0.5cm}
\noindent
{\it Uniqueness.} We prove the uniqueness by using 
adequately the stability result of Lemma~\eqref{lem:unicidad2}
and the necessary optimality condition
of Lemma~\ref{teo:necessaryoptimalcondition}. To be more precise, let us 
consider that the sets of functions $\{S,I,p_1,p_2\}$ and 
$\{\hat{S},\hat{I},\hat{p}_1,\hat{p}_2\}$ are solutions
to the systems \eqref{eq1:suceptibles}-\eqref{eq5:inicialI} and
\eqref{eq10:adjunto1}-\eqref{eq14:adjunto5}
with the data $\{\beta,\gamma,S^{obs},I^{obs}\}$ 
and $\{\hat{\beta},\hat{\gamma},\hat{S}^{obs},\hat{I}^{obs}\},$
respectively.
From Lemma~\ref{teo:necessaryoptimalcondition} and the hypothesis
that $(\beta,\gamma)$ and $(\hat{\beta},\hat{\gamma})$ are solutions
of \eqref{eq9:OptimizacionEquivalencia} we have that
the following inequalities
\begin{align}
&\iint_{Q_{T}}\left[ \left( \overline{\overline{{\beta}}} 
- \beta \right)S^{m} I^{n} - \left(\overline{\overline{{\gamma}}} 
- \gamma\right)I \right]
(p_{2}-p_{1})\,d\bx dt \nonumber
\\
&
\hspace{1cm}
+ \delta\int_{\Omega}\left[ \nabla\beta\nabla\left(\overline{\overline{{\beta}}}
- \beta\right) +
\nabla\gamma\nabla\left(\overline{\overline{{\gamma}}}
- \gamma\right) \right] \,d\bx\geq 0,
\qquad 
\forall (\overline{\overline{{\beta}}},\overline{\overline{{\gamma}}})
\in U_{ad},
\label{eq:opt_cond_1}
\\
&\iint_{Q_{T}}\left[ \left( \underline{\underline{{\beta}}} 
-  \hat{\beta} \right)\hat{S}^{m} \hat{I}^{n} - \left(\underline{\underline{{\gamma}}} 
- \hat{\gamma}\right)\hat{I} \right]
(\hat{p_{2}}-\hat{p_{1}})\,d\bx dt \nonumber
\\
&
\hspace{1cm}
+ \delta\int_{\Omega}\left[ \nabla\hat{\beta}\nabla\left(\underline{\underline{{\beta}}}
- \hat{\beta}\right) +
\nabla\hat{\gamma}\nabla\left(\underline{\underline{{\gamma}}}
-  \hat{\gamma}\right) \right] \,d\bx\geq 0,
\qquad 
\forall (\underline{\underline{{\beta}}},\underline{\underline{{\gamma}}})
\in U_{ad},
\label{eq:opt_cond_2}
\end{align}
are satisfied, respectively. In particular, selecting 
$(\overline{\overline{{\beta}}},\overline{\overline{{\gamma}}})
=(\hat{\beta},\hat{\gamma})$ in \eqref{eq:opt_cond_1} and
$(\underline{\underline{{\beta}}},\underline{\underline{{\gamma}}})
=(\beta,\gamma)$ in \eqref{eq:opt_cond_2}, and adding
both inequalities, we get
\begin{align}
& \delta\left[
 \| \nabla(\hat{\beta}-\beta)\|^2_{L^2(\Omega)} +\| \nabla(\hat{\gamma}-\gamma)\|^2_{L^2(\Omega)}
 \right]
 \nonumber\\
&\hspace{3cm}
 \le\iint_{Q_{T}}\Big| \hat{\beta}- \beta \Big|
 \Big|\hat{S}^{m}\hat{I}^{n}(\hat{p_{2}}-\hat{p_{1}})-S^{m} I^{n}(p_{2}-p_{1})\Big|\,d\bx dt 
 \nonumber\\
&\hspace{3cm} \quad 
 +\iint_{Q_{T}}| \hat{\gamma}- \gamma |
 |\hat{I}(\hat{p_{2}}-\hat{p_{1}})-I(p_{2}-p_{1})|\,d\bx dt:=I_1+I_2.
 \label{eq:estimate_teo}
\end{align}
Now, from \eqref{eq:holderianadas}, \eqref{eq:ident}, 
Corollary~\ref{corolar:bounded}, Lemma~\ref{lem:boundedp_i},
and the Cauchy inequality,
we observe that
\begin{align*}
I_1&\le
\iint_{Q_{T}}| \hat{\beta}- \beta |
|\hat{S}^{m}\hat{I}^{n}-S^{m} I^{n}||\hat{p_{1}}|\,d\bx dt
+\iint_{Q_{T}}| \hat{\beta}- \beta |
|\hat{S}^{m}\hat{I}^{n}-S^{m} I^{n}||\hat{p_{2}}|\,d\bx dt
\\
&\quad
+\iint_{Q_{T}}| \hat{\beta}- \beta ||S^{m} I^{n}||\hat{p_{1}}-p_{1}|\,d\bx dt
+\iint_{Q_{T}}| \hat{\beta}- \beta ||S^{m} I^{n}||\hat{p_{2}}-p_{2}|\,d\bx dt
\\
&\le
\frac{n}{2}\mathbb{S}_M^m \mathbb{I}_m^{n-1}
\max\Big\{P_4,P_5\Big\}
\left(T\| \hat{\beta}-\beta\|^2_{L^2(\Omega)}
+\int_0^T\| \hat{I}(\cdot,t)-I(\cdot,t)\|^2_{L^2(\Omega)}dt
\right)
\\
&\quad
+\frac{m}{2}\mathbb{I}_M^n\mathbb{S}_m^{m-1}
\max\Big\{P_4,P_5\Big\}
\left(T\| \hat{\beta}-\beta\|^2_{L^2(\Omega)}
+\int_0^T\| \hat{S}(\cdot,t)-S(\cdot,t)\|^2_{L^2(\Omega)}dt
\right)
\\
&\quad
+\frac{m}{2}\mathbb{S}_M^m \mathbb{I}_M^n
\left(2T\| \hat{\beta}-\beta\|^2_{L^2(\Omega)}
+\int_0^T\| (\hat{p}_1-p_1)(\cdot,t)\|^2_{L^2(\Omega)}dt
+\int_0^T\| (\hat{p}_2-p_2)(\cdot,t)\|^2_{L^2(\Omega)}dt
\right)
\end{align*}
and
\begin{align*}
 I_2&\le\max\Big\{P_4,P_5\Big\}
 \left(T\| \hat{\gamma}-\gamma\|^2_{L^2(\Omega)}
 +
 \int_0^T\| \hat{I}(\cdot,t)-I(\cdot,t)\|^2_{L^2(\Omega)}dt\right)
 \\
 &\quad
 +\mathbb{I}_M
 \left(T\| \hat{\gamma}-\gamma\|^2_{L^2(\Omega)}
 +
 \int_0^T\|(\hat{p}_1-p_{1})(\cdot,t)\|^2_{L^2(\Omega)}dt\right).
\end{align*}
From Lemma~\ref{lem:unicidad2} and the estimates of  $I_1$ and $I_2$ 
in~\eqref{eq:estimate_teo}  we have that
\begin{align}
& \delta\left[
 \| \nabla(\hat{\beta}-\beta)\|^2_{L^2(\Omega)} +\| \nabla(\hat{\gamma}-\gamma)\|^2_{L^2(\Omega)}
 \right]
 \nonumber\\
&
\quad
\le
\Upsilon_1
\left[
 \| \hat{\beta}-\beta\|^2_{L^2(\Omega)} +\| \hat{\gamma}-\gamma\|^2_{L^2(\Omega)}
 \right]
+
 \Upsilon_2
 \left[
 \| \hat{S}^{obs}-S^{obs}\|^2_{L^2(\Omega)} +\| \hat{I}^{obs}-I^{obs}\|^2_{L^2(\Omega)}
 \right].
\label{eq:previous_inequality}
\end{align}
where
\begin{align*}
\Upsilon_1&=\left[\left(
\frac{n}{2}\mathbb{S}_M^m \mathbb{I}_m^{n-1}
+\frac{m}{2}\mathbb{S}_m^{m-1} \mathbb{I}_M^n+1
\right)(1+\Psi_1)\max\Big\{P_4,P_5\Big\}
+\left(\frac{m}{2}\mathbb{S}_M^m \mathbb{I}_M^n+\mathbb{I}_M\right)(2+\Psi_2)\right]T,
\\
\Upsilon_2&=
  \left(\frac{m}{2}\mathbb{S}_M^m \mathbb{I}_M^n+\mathbb{I}_M\right)
 \Psi_3 T.
\end{align*}
Now, considering that $(\hat{\beta},\hat{\gamma}),(\beta,\gamma)\in\mathcal{U}(\Omega),$
by the generalized Poincar\'e inequality, we have that
\begin{align*}
 &\| \hat{\beta}-\beta\|^2_{L^2(\Omega)} +\| \hat{\gamma}-\gamma\|^2_{L^2(\Omega)}
 \\
 &
 \qquad
 \le 
 C_{poi}\Big(\| \nabla(\hat{\beta}-\beta)\|^2_{L^2(\Omega)} 
 +\| \nabla(\hat{\gamma}-\gamma)\|^2_{L^2(\Omega)}
 +\| \hat{\beta}-\beta\|^2_{L^1(\Omega)} +\| \hat{\gamma}-\gamma\|^2_{L^1(\Omega)}\Big)
 \\
 & \qquad
 = 
 C_{poi}\Big(\| \nabla(\hat{\beta}-\beta)\|^2_{L^2(\Omega)} 
 +\| \nabla(\hat{\gamma}-\gamma)\|^2_{L^2(\Omega)}\Big).
\end{align*}
Then, in \eqref{eq:previous_inequality}
we have that
\begin{align*}
\Big(\delta-\Upsilon_2C_{poi}\Big)\left[
 \| \nabla(\hat{\beta}-\beta)\|^2_{L^2(\Omega)} +\| \nabla(\hat{\gamma}-\gamma)\|^2_{L^2(\Omega)}
 \right]
 \le
 \Upsilon_2
 \left[
 \| \hat{S}^{obs}-S^{obs}\|^2_{L^2(\Omega)} +\| \hat{I}^{obs}-I^{obs}\|^2_{L^2(\Omega)}
 \right].
\end{align*}
Thus, selecting $\Theta=\Upsilon_2C_{poi}$ we deduce the uniqueness
up an additive constant.

\section*{Acknowledgments}

We acknowledge the support of the research projects 
DIUBB GI 172409/C, DIUBB 183309 4/R, Posdoctoral Program, and FAPEI at
Universidad del B{\'\i}o-B{\'\i}o (Chile); 
and CONICYT (Chile) through the program ``Becas de Doctorado''

\end{document}